\def\bC{{\bf C}}
\def\bN{{\bf N}}
\def\bQ{{\bf Q}}
\def\bZ{{\bf Z}}
\def\C{\bC}
\def\Z{\bZ}
\def\N{\bN}
\def\Q{\bQ}
\def\sO{{\mathscr O}}
\def\sW{{\mathscr W}}
\def\sZ{{\mathscr Z}}
\def\ie{\textit {i.e.} }
\def\cf{\textit {cf.} }
\def\vs{\textit {vs.} }
\def\loccit{\textit {loc.cit.} }
\def\apriori{\textit{a priori} }
\def\etc{\textit {etc.} }
\def\Bl{\mathop{\rm Bl}\nolimits} 
\def\CH{\mathop{\rm CH}\nolimits} 
\def\Hom{\mathop{\rm Hom}\nolimits}
\def\id {\mathop{\rm id}\nolimits} 
\def\im{\mathop{\rm Im}\nolimits} 
\def\Ker{\mathop{\rm Ker}\nolimits} 
\def\pr{\mathop{\rm pr}\nolimits} 
\def\prim{\mathop{\rm prim}\nolimits} 
\def\rank{\mathop{\rm rank}\nolimits}
\def\Supp {\mathop{\rm Supp}\nolimits} 
\def\miu{\mathop{\bf \mu} \nolimits} 
\def\bar{\overline}
\def\inj{\hookrightarrow}
\def\surj{\twoheadrightarrow}
\def\lra{\xrightarrow}
\def\cart{\ar@{}[dr]|\square} 
\def\cisom{\cong} 
\def\isom{\simeq} 
\def\tilde{\widetilde}
\newenvironment{proof}{\trivlist \item[\hskip\labelsep{\sc{
Proof.}}]}{\hbox to.1pt{\hss}\hfill$\square$\bigskip\endtrivlist}
\newenvironment{prooflemma}{\trivlist \item[\hskip\labelsep{\sc
Proof of the Lemma.}]\rm}{\hbox
to.1pt{\hss}\hfill$\square$\bigskip\endtrivlist}
\newtheorem{thm}{Theorem}[section]
\newtheorem{prop}[thm]{Proposition}
\newtheorem{lemma}[thm]{Lemma}
\newtheorem{cor}[thm]{Corollary}
\newtheorem{conj}[thm]{Conjecture}
\newtheorem{defi}[thm]{Definition}
\newtheorem{rmk}[thm]{Remark}
\newtheorem{rmks}[thm]{Remarks}
\newtheorem{expl}[thm]{Example}
\newtheorem{def-prop}[thm]{Definition-Proposition}
\newtheorem{prop-def}[thm]{Proposition-Definition}
\begin{document}

\title{On the Coniveau of Certain Sub-Hodge Structures}
\date{ }
\author{Lie Fu\\
D\'epartement de Math\'ematiques et Applications\\ \'Ecole Normale Sup\'erieure\\45 Rue d'Ulm,
75230 Paris Cedex 05\\France\\ lie.fu@ens.fr}
\maketitle

\begin{abstract}
We study the generalized Hodge conjecture for certain sub-Hodge structure defined as the kernel of the cup product map with a big cohomology class, which is of Hodge coniveau at least 1. As predicted by the generalized Hodge conjecture, we prove that the kernel is supported on a divisor, assuming the Lefschetz standard conjecture.
\end{abstract}

\section{Introduction}
Given a smooth projective variety $X$ defined over $\C$, let $H^k(X,\Q)$ be its $k$-th Betti cohomology group, which carries a pure Hodge structure of weight $k$. We can ask the philosophical question: how much information about the geometry of the variety, for example a knowledge of its subvarieties, can be extracted from the shape of certain associated transcendental objects, namely the Hodge structures on its cohomology groups? The generalized Hodge conjecture formulates a precise such relationship.

Recall that the \emph{Hodge coniveau} of a weight $k$ (pure) Hodge structure $(L, L^{p,q})$ is defined to be the largest integer $c\leq\lfloor\frac{k}{2}\rfloor$ such that $L^{0,k}=L^{1,k-1}=\cdots=L^{c-1,k-c+1}=0$. If for any integer $c$, we define $N^c_{\text{Hdg}}\!H^k(X,\Q)$ as the sum of all the sub-Hodge structures in $H^k(X,\Q)$ of Hodge coniveau at least $c$, we obtain the \emph{Hodge coniveau filtration} on $H^k(X,\Q)$. On the other hand, in terms of the topology of algebraic subvarieties of $X$, we also have the so-called \emph{arithmetic filtration} or \emph{coniveau filtration} $N^cH^k(X,\Q)$ (\cf \cite{MR0244271} \cite{MR0252404} \cite{MR0412191}), where $N^cH^k(X,\Q)$ consists of the cohomology classes supported on some algebraic subset of codimension at least $c$, here \emph{supported} on a closed subset means the class becomes zero when it is restricted to the open complement. The following inclusion (\cf \S \ref{sectionGHC}) gives a first relation between the two filtrations:
\begin{equation}\label{inclusion}
 N^cH^k(X,\Q)\subset N^c_{\text{Hdg}}\!H^k(X,\Q).
\end{equation}
In his famous paper \cite{MR0252404}, Grothendieck conjectures that the two filtrations in fact coincide, more precisely:
\begin{conj}[Grothendieck amended generalized Hodge conjecture]\label{GHC}
Let $X$ be a smooth projective variety of dimension $n$, $0\leq k\leq n$ be an integer, and $L\subset H^k(X, \Q)$ be a sub-Hodge structure of Hodge coniveau at least $c$, then there exists a closed algebraic subset $Z$ of codimension at least $c$, such that $$L\subset \Ker\left(j^*: H^k(X,\Q)\to H^k(X\backslash Z, \Q)\right),$$ where $j: X\backslash Z\to X$ is the natural inclusion.
\end{conj}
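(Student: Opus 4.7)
The plan is to reduce Conjecture \ref{GHC} to the classical Hodge conjecture applied to $X$ together with auxiliary smooth projective varieties. Given a sub-Hodge structure $L \subset H^k(X,\Q)$ of Hodge coniveau $\geq c$, observe that the Tate twist $L(c)$ is an effective Hodge structure of weight $k-2c$. The goal is to realize $L$ as the image of a Gysin map $f_{*} : H^{k-2c}(\tilde Z, \Q) \to H^k(X,\Q)$ associated with a morphism $f : \tilde Z \to X$ from a smooth projective variety $\tilde Z$ such that $Z := f(\tilde Z)$ has codimension $\geq c$ in $X$. Once such a pair $(\tilde Z, f)$ is produced, the long exact sequence of the pair $(X, X\backslash Z)$ combined with purity forces $L \subset \Ker\bigl(j^* : H^k(X,\Q) \to H^k(X\backslash Z, \Q)\bigr)$, yielding the conclusion.

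The construction of $(\tilde Z, f)$ would proceed through algebraic correspondences. First, choose a Hodge-theoretic projector $\pi_L : H^k(X,\Q) \to H^k(X,\Q)$ with image $L$; via Poincar\'e duality this is a Hodge class of type $(n,n)$ in $H^{2n}(X\times X, \Q)$. Assuming the Hodge conjecture on $X \times X$, this class is realized by an algebraic cycle $\Gamma \in \CH^n(X\times X)_\Q$ with $\Gamma_{*} = \pi_L$, so that $L = \Gamma_{*}(H^k(X,\Q))$. The second step is the crucial refinement: using the coniveau $\geq c$ of $L$, one must modify $\Gamma$ within its rational equivalence (or at least cohomological) class so that its first projection is supported on a closed subset $W \subset X$ of codimension $\geq c$. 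A resolution of singularities $\tilde Z \to W$ and restriction of $\Gamma$ would then provide the desired pair.

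The main obstacle is precisely this refinement of support. Forcing an algebraic representative of a Hodge projector with a prescribed coniveau to be supported in the correct codimension is not formal: generically it itself reduces to a lower-dimensional instance of the generalized Hodge conjecture, producing an apparent circularity. The natural tool to try to break it is the Lefschetz standard conjecture, which makes the K\"unneth components of the diagonal and the inverses of hard Lefschetz operators algebraic; together with hard Lefschetz this allows one to transport cohomological coniveau information into support conditions on correspondences via purely algebraic manipulations. Even so, neither the Hodge conjecture for $X\times X$ nor the Lefschetz standard conjecture is currently available, and, crucially, their combination is still not known to be sufficient to extract the correct algebraic support in full generality, as the refinement seems to need geometric input specific to $L$. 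This is why the present paper restricts to the case $L = \Ker(\cup\,\alpha)$ for a big rational $(1,1)$-class $\alpha$: here $\alpha$ itself provides the geometric input from which a correspondence with controlled support can be built, and the Lefschetz standard conjecture suffices to carry out the refinement; the full \textbf{Conjecture \ref{GHC}} remains open precisely because no such geometric hook is available in the abstract setting.
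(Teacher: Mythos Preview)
The statement in question is a \emph{conjecture}, and the paper does not prove it; indeed the paper explicitly states that the generalized Hodge conjecture is ``widely open.'' So there is no proof in the paper to compare against, and your proposal correctly concludes that the conjecture remains open. What can be compared is your analysis of the obstruction versus the paper's.

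Your framing of the difficulty differs from the paper's in a way that makes your life harder than necessary. You start from an algebraic projector $\Gamma\in\CH^n(X\times X)_\Q$ with $\Gamma_*=\pi_L$ and then ask to \emph{refine its support} so that one projection lands in codimension $\geq c$. You rightly observe that this refinement step is not formal and appears circular. The paper, in Lemma~\ref{gap}, isolates the obstruction differently: the only missing ingredient beyond the usual Hodge conjecture is an \emph{effective realization}, namely a smooth projective $Y$ with $L(c)\hookrightarrow H^{k-2c}(Y,\Q)$. Once such a $Y$ exists (and by hyperplane sections one may take $\dim Y=k-2c$), the morphism $H^{k-2c}(Y,\Q)(-c)\to H^k(X,\Q)$ with image $L$ is a Hodge class on $Y\times X$; the usual Hodge conjecture then produces a cycle in $\CH_{n-c}(Y\times X)_\Q$, whose projection to $X$ is \emph{automatically} of dimension $\leq n-c$. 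No support refinement is needed. Thus the paper's reduction is strictly cleaner: the gap between the usual and the generalized Hodge conjecture is precisely the effective realization condition~$(*)$, not a support-moving problem for correspondences on $X\times X$.

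Two minor inaccuracies in your final paragraph: the paper's big class $\gamma$ lives in $H^{2n-2k}(X,\Q)$ (Definition~\ref{bigness}), not merely in $H^{1,1}$; and the geometric input that makes the special case tractable is exactly Proposition~\ref{geometrization}, which \emph{constructs} an effective realization $L(1)\hookrightarrow H^{k-2}(Y,\Q)$ from the decomposition $\gamma=[A]^{n-k}+[E]$ and the Hodge--Riemann relations --- this is what plays the role of condition~$(*)$ and is what is unavailable for an abstract sub-Hodge structure.
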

Note that the usual Hodge conjecture is the case $k=2c$.\\
The usual Hodge conjecture already has many theoretical consequences. For example, it implies that a morphism of Hodge structures between the cohomology groups of two smooth projective varieties is always induced by an algebraic correspondence (\cf Remark \ref{morphism}). In particular, it implies the Lefschetz standard conjecture (\cf \S\ref{lefsection}), which says that the inverse of the hard Lefschetz isomorphism $$L_X^i: H^{n-i}(X,\Q)\lra{\isom} H^{n+i}(X,\Q)$$
is induced by an algebraic cycle in $\CH^{n-i}(X\times X)_\Q$.
The generalized Hodge conjecture has strong implications about the Chow groups, let us just mention \cite{MR1205880} \cite{MR2585582} \cite{HodgeBloch}.

The usual Hodge conjecture is widely open. The known cases of it include $k=2c=0,2,2n-2,2n$ (thus for varieties of dimension at most 3), varieties with cellular decomposition (Grassmannians, flag varieties, or more generally, quotients of reductive linear algebraic groups by parabolic subgroups), cubic 4-folds (\cite{MR0453741} \cite{MR818549}) \etc    While for the generalized Hodge conjecture, besides the aforementioned cases, very few are known. One class of known cases concerns about algebraic varieties with an automorphism group, see for example \cite{MR1133322} and \cite{MR1205880}. As far as we know, besides these and some results about abelian varieties (\cf \cite{MR1485922}  \cite{MR1831630} \cite{MR1923969}), there are no general results verifying the conjecture for a proper sub-Hodge structure.

In this paper, we try to understand such a sub-Hodge structure situation. Our starting point is a discovery of a sub-Hodge structure of Hodge coniveau $\geq1$, which we describe here in the case of divisors for simplicity.

For an ample divisor $A$ on an $n$-dimensional smooth projective variety $X$, the hard Lefschetz isomorphism tells us in particular that $\Ker(\cup [A]: H^{n-1}(X)\to H^{n+1}(X))$ vanishes. Now if we  weaken the positivity assumption, namely consider a \emph{big} divisor $D=A+E$, where $A$ is an ample divisor, and $E=\sum_im_iE_i$ is an effective divisor, then in general, $$L:=\Ker\left(\cup [D]: H^{n-1}(X)\to H^{n+1}(X)\right)$$ could be non-trivial, for instance: (see also Example \ref{example})\\

\textbf{Example.} Let $X=\Bl_yY\lra{\tau} Y$ be the blow-up of a point in a smooth projective 3-fold $Y$, and $D:=\tau^*(\sO_Y(1))$ be the pull back of an ample divisor on $Y$. Then $D$ is big, while $L=\Ker(\cup [D]: H^{2}(X)\to H^{4}(X))$ is generated by the fundamental class of the exceptional divisor $[E]\in H^2(X)$.
\vspace{0.3cm}

Although $L$ does not vanish in general, we still expect the positivity condition on $D$ implies some control on $L$. In the above example we can readily see that $L$ is supported on a divisor, thus of Hodge coniveau $\geq 1$ in particular. Following the idea of \cite{MR2585582}, we get in general:\\

\textbf{Observation (Lemma \ref{observation}):} $L$ is of Hodge coniveau at least 1.
\vspace{0.4cm}

Indeed, for any class $\alpha\in H^{n-1,0}(X)$ (in particular it is primitive), if $D\cup\alpha=0$ but $\alpha\neq 0$, then
$ 0 = \int_X D\alpha\bar\alpha
= \int_X [A] \alpha\bar\alpha + \int_X [E]\alpha\bar\alpha
= \int_X [A]\alpha\bar\alpha + \sum_i m_i\int_{\tilde E_i}\tau_i^*(\alpha)\bar{\tau_i^*(\alpha)}
$,
where $\tau_i:\tilde E_i\to E_i$ is a resolution of singularities for each $i$.
However the second Hodge-Riemann bilinear relation (\cf \cite{MR1967689}) gives
\begin{equation*}
 (-1)^{\frac{n(n-1)}{2}}i^{n-1}\int_X [A]\alpha\bar\alpha>0
\end{equation*}
and for any $i$
\begin{equation*}
 (-1)^{\frac{n(n-1)}{2}}i^{n-1}\int_{\tilde E_i}\tau_i^*(\alpha)\bar{\tau_i^*(\alpha)}\geq 0
\end{equation*}
Summing up these inequalities, we have a contradiction, therefore $\alpha=0$, thus proving the observation.

Regarding the generalized Hodge conjecture, we ask the natural\\

\textbf{Question (Conjecture \ref{mainquestion}).} Can we prove that the kernel $L$ of cup product with a big class is supported on a divisor of $X$?
\vspace{0.3cm}

We answer this question in this paper assuming the Lefschetz standard conjecture. Here is the main theorem, where the role of big divisor classes is played by the more general notion of \emph{big cohomology classes} (\cf Definition \ref{bigness}):
\begin{thm}[=Theorem \ref{mainthm}]\label{mainthm0}
 Let $X$ be a smooth projective variety of dimension $n$, $k\in \{0, 1,\cdots, n\}$ be an integer, and $\gamma\in H^{2n-2k}(X,\Q)$ be a big cohomology class. Let $L$ be the kernel of the following morphism of `cup product with $\gamma$':
$$\cup\gamma: H^k(X,\Q)\to H^{2n-k}(X,\Q).$$
Then assuming the Lefschetz standard conjecture, $L$ is supported on a divisor of $X$, that is, $$L\subset \Ker\left(H^k(X,\Q)\to H^k(X\backslash Z, \Q)\right)$$ for some closed algebraic subset $Z$ of codimension 1.
\end{thm}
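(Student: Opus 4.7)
The plan is to combine the definition of bigness with the Lefschetz standard conjecture, so as to trace the kernel $L$ back to a divisor. First, by the definition of a big cohomology class (Definition~\ref{bigness}), write $\gamma = h^{n-k} + \eta$, where $h \in H^2(X, \Q)$ is the class of an ample divisor and $\eta \in H^{2(n-k)}(X, \Q)$ is represented by a nonnegative $\Q$-combination of fundamental classes $[V_i]$ of irreducible subvarieties $V_i \subsetneq X$ of codimension $n-k$. Each $V_i$ sits inside a divisor of $X$, and collecting these gives a (reducible) divisor $Z \subset X$ on which $\eta$ is supported. Letting $\iota_Z : \tilde Z \to X$ be the composition of a resolution of singularities of $Z$ with the inclusion, we may write $\eta = (\iota_Z)_* \eta'$ for some $\eta' \in H^{2(n-k)-2}(\tilde Z, \Q)$.

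For $x \in L$, the identity $\gamma \cup x = 0$ combined with the projection formula yields
\[
L_X^{n-k}(x) \;=\; h^{n-k} \cup x \;=\; -(\iota_Z)_*\bigl(\eta' \cup \iota_Z^* x\bigr),
\]
so $L_X^{n-k}(x)$ is supported on $Z$. The Lefschetz standard conjecture applied to $X$ supplies an algebraic cycle $\Gamma \in \CH^k(X \times X)_\Q$ whose induced map $\Phi := [\Gamma]_*$ inverts the hard Lefschetz isomorphism $L_X^{n-k} : H^k(X, \Q) \xrightarrow{\sim} H^{2n-k}(X, \Q)$. Applying $\Phi$ gives
\[
x \;=\; -\Phi\bigl((\iota_Z)_*(\eta' \cup \iota_Z^* x)\bigr),
\]
and the remaining task is to show this class is supported on a divisor of $X$.

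The key is the projection-formula identity $L_X^{n-k} \circ (\iota_Z)_* = (\iota_Z)_* \circ L_{\tilde Z}^{n-k}$ on $H^{k-2}(\tilde Z, \Q)$, where $L_{\tilde Z} = \cup \iota_Z^* h$: whenever $\eta' \cup \iota_Z^* x = L_{\tilde Z}^{n-k}(\beta)$ for some $\beta \in H^{k-2}(\tilde Z, \Q)$, we have $\Phi((\iota_Z)_*(\eta' \cup \iota_Z^* x)) = (\iota_Z)_* \beta$, which is supported on $Z$, finishing the proof. Thus the problem reduces to verifying that $\eta' \cup \iota_Z^* x$ lies in the image of $L_{\tilde Z}^{n-k} : H^{k-2}(\tilde Z, \Q) \to H^{2n-k-2}(\tilde Z, \Q)$. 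A direct dimension-count on the projection of $\Gamma \cap (Z \times X)$ to $X$ is insufficient when $k < n$, which is why we rather exploit the Lefschetz decomposition: the cokernel of $L_{\tilde Z}^{n-k}$ is the primitive summand $L_{\tilde Z}^{n-k-1} H^k_{\prim}(\tilde Z)$, so one must show the component of $\eta' \cup \iota_Z^* x$ in this summand vanishes. The extremal Hodge $(k,0)$ and $(0,k)$ parts of the obstruction, which have Hodge coniveau $n-k-1$, are ruled out by coniveau: $\eta'$ has Hodge type $(n-k-1, n-k-1)$ and $\iota_Z^* x$ inherits coniveau $\geq 1$ from $x \in L$ by Lemma~\ref{observation}, forcing the product to have coniveau $\geq n-k$. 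The principal difficulty lies in eliminating the remaining intermediate primitive pieces, for which I would apply the Lefschetz standard conjecture to $\tilde Z$ itself to extract algebraic projectors onto the Lefschetz-primitive summands of $H^k(\tilde Z)$, and combine these with Hodge-Riemann bilinear relations—mirroring the positivity argument used in Lemma~\ref{observation}—to force the remaining components to vanish.
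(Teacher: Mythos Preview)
Your approach is genuinely different from the paper's. The paper never tries to show that $L$ is supported on the specific divisor carrying the effective part of $\gamma$; instead it builds (via Proposition~\ref{geometrization}) an auxiliary smooth variety $Y$ of dimension $k-1$ with $L\cap\Ker(\miu^*)=0$, then uses the standard conjecture to construct an \emph{adjoint correspondence} $C^\dagger$ and an algebraic orthogonal projector $\pr_L$ (Proposition~\ref{projectorL}), so that $L=\im(\pr_L\circ C^\dagger)$ is the image of an algebraic correspondence from $H^{k-2}(Y)$. Your route---pushing $x$ back through $(\iota_Z)_*$ from a preimage $\beta$ under $L_{\tilde Z}^{n-k}$---is more direct in spirit but has two real gaps.

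First, the class $\iota_Z^*h$ is in general only big and nef on the resolution $\tilde Z$, not ample: the resolution map $\tilde Z\to Z$ contracts curves whenever $Z$ is singular, and there is no reason one can choose a \emph{smooth} divisor containing the support of $\eta$. Consequently neither hard Lefschetz nor the Lefschetz decomposition is available for $L_{\tilde Z}=\cup\,\iota_Z^*h$, and your identification of $\Coker(L_{\tilde Z}^{n-k})$ with the single summand $L_{\tilde Z}^{n-k-1}H^k_{\prim}(\tilde Z)$ is unjustified.

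Second, the final paragraph is a hope rather than an argument. The positivity mechanism in Lemma~\ref{observation} works only because a class of extremal Hodge type $(k,0)$ is automatically primitive both on $X$ and on each $\tilde E_i$, forcing all terms in the relevant sum to share a sign; for intermediate Hodge types this automatic primitivity fails and no sign control survives. More tellingly, the claim you actually need---that $\eta'\cup\iota_Z^*x$ lies in $\im(L_{\tilde Z}^{n-k})$---is a statement about a single cohomology class, not about algebraicity of any correspondence, so invoking ``algebraic projectors'' from the standard conjecture on $\tilde Z$ does not buy anything here. Indeed, your displayed reduction $x=-(\iota_Z)_*\beta$ uses only the \emph{injectivity} of $L_X^{n-k}$ (unconditional hard Lefschetz), not the algebraicity of its inverse $\Phi$; so if your last step went through, the entire argument would be unconditional. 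That is too strong, and it is precisely this missing piece---producing $L$ as the image of an \emph{algebraic} correspondence rather than showing a class-by-class inclusion---that the paper's adjoint/projector machinery is designed to supply.
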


The proof consists of three steps:
\begin{itemize}
 \item Proposition \ref{geometrization} realizes $L(1)$ effectively as a sub-Hodge structure of the degree $(k-2)$ cohomology of some other smooth projective variety. This step reduces the question to the usual Hodge conjecture;
 \item We use the standard conjecture to construct adjoint correspondences (\S\ref{adjointsubsection}) to get a divisor-supported sub-Hodge structure which is transverse to the orthogonal complement of $L$ as in (\ref{surj2});
 \item We use the adjoint correspondences (\S\ref{adjointsubsection}) to construct the orthogonal projector onto $L$ in Proposition \ref{projectorL}.
\end{itemize}

Here is the structure of this paper.
In \S2, besides some general remarks on the generalized Hodge conjecture, we give a description of the gap between the usual and the generalized Hodge conjectures (Lemma \ref{gap}). In \S3 we introduce the coniveau 1 sub-Hodge structure mentioned above, which is our main object of study, and we show that the generalized Hodge conjecture \ref{GHC} is satisfied for it assuming the usual Hodge conjecture. In \S4 we begin by making some general remarks concerning the Lefschetz standard conjecture, then we give the basic construction of the so-called \emph{adjoint correspondences}, and finally we prove our main theorem \ref{mainthm0}. In the last section \S5, we discuss some unconditional results and give a reinterpretation of our main result in the language of motivated cycles of Y. Andr\'e.\\

\section*{Acknowledgements} 
I would like to express my gratitude to my thesis advisor Claire Voisin for bringing to me this interesting question, as well as her kind help and great patience during this work.

\section{Generalities of the Generalized Hodge Conjecture}\label{sectionGHC}
In this section, we introduce the generalized Hodge conjecture and make a comparison with the usual Hodge conjecture.

First of all, let us recall some standard terminologies\footnote{We will ignore the usual factor $2\pi i$, which makes the formulations in algebraic de Rham cohomology and in Betti cohomology compatible. But we will not make any such comparison argument in this paper.}. Let $m$ be an integer.
\begin{itemize}
 \item The \emph{Tate Hodge structure} $\Q(m)$ is the pure Hodge structure of weight $-2m$, with the underlying rational vector space $\Q$, and with the Hodge decomposition concentrated at bidegree $(-m,-m)$.
 \item The \emph{Tate twist} $L(m)$ of a pure Hodge structure $L$ of weight $k$ is defined to be the tensor product $L\otimes \Q(m)$, which is a Hodge structure of weight $k-2m$. More concretely, the underlying rational vector space is $L$, while the Hodge decomposition $L(m)\otimes_\Q \C=\oplus_{p+q=k-2m}L(m)^{p,q}$ is given by $L(m)^{p,q}=L^{p+m,q+m}$.
 \item A weight $k$ pure Hodge structure $(L, L_\C=\oplus_{p+q=k}L^{p,q})$ is called \emph{effective}, if $L^{p,q}=0$ when $p<0$ or $q<0$.
\end{itemize}

Here is the important notion of \emph{Hodge coniveau} of a Hodge structure.

\begin{defi}[Hodge coniveau]\upshape
Let $$(L, L_\C=\bigoplus_{\substack{p+q=k \\ p,q\geq 0}}L^{p,q})$$ be an effective pure Hodge structure of weight $k$. The \emph{Hodge coniveau} of $L$ is defined to be the largest integer $c$ such that the Tate twist $L(c)$ is an effective pure Hodge structure of weight $k-2c$. In other words, the Hodge decomposition takes the following form
$$L_\C=L^{c,k-c}\oplus L^{c+1, k-c-1}\oplus\cdots\oplus L^{k-c,c}$$
with $L^{c,k-c}\neq 0$.

Note that the Hodge coniveau of a non-zero effective pure Hodge structure of weight $k$ is always $\leq\lfloor\frac{k}{2}\rfloor$.
\end{defi}

Given a smooth projective variety $X$ of dimension $n$, and a closed algebraic subset $Z$ of codimension $\geq c$, it is an easy consequence of the strictness of morphisms between mixed Hodge structures (\cf \cite{MR0498551}) that
$$\Ker\left(H^k(X)\lra{j^*} H^k(X\backslash Z)\right)= \im\left(H_{2n-k}(Z)(-n)\lra{i_*} H^k(X)\right)$$ is equal to
$$\im \left(H_{2n-k}(\tilde Z)(-n)\lra{\tilde i_*} H^k(X)\right)=\im\left(H^{k-2c}(\tilde Z)(-c)\lra{\tilde i_*} H^k(X)\right),$$
where $i, j$ are the inclusions, $\tau: \tilde Z\to Z$ is a resolution of singularities of $Z$, $\tilde i=i\circ \tau$, and all the (co-)homology groups are with rational coefficients. Since $H^{k-2c}(\tilde Z,\Q)(-c)$ is a Hodge structure of Hodge coniveau $\geq c$, we deduce that $\im\left(\tilde i_*\right)$ hence $\Ker(j^*)$ is a sub-Hodge structure of Hodge coniveau at least $c$.

This explains in particular the inclusion (\ref{inclusion}) in the introduction: $$N^cH^k(X,\Q)\subset N^c_{\text{Hdg}}\!H^k(X,\Q),$$ while Grothendieck's generalized Hodge conjecture \ref{GHC} states the reverse inclusion. In the situation of the conjecture, we say that $L$ is \emph{supported on $Z$}.
\begin{rmks}\upshape The generalized Hodge conjecture is widely open.
 \begin{itemize}
  \item The cases $k=0, 1$ are trivial, and the case of $k=2$ follows from the Lefschetz theorem on (1,1)-classes.
  \item For $k\leq n$, if we view the cohomology group $H^{2n-k}(X,\Q)$ as of weight $k$ via the twist $\Q(n-k)$, the analogous conjecture for $H^{2n-k}(X,\Q)$ follows from the conjecture for $H^k(X,\Q)$ by hard Lefschetz isomorphisms.
  \item Note that the usual Hodge conjecture is exactly the case when $k=2i$ and $c=i$, since to give a sub-Hodge structure of Hodge coniveau $i$ in $H^{2i}(X,\Q)$ amounts to give a Hodge class of degree $2i$ up to a constant scalar. The known cases of the usual Hodge conjecture include $k=2c=0,2,2n-2,2n$ (thus for varieties of dimension at most 3), varieties with cellular decomposition (Grassmannians, flag varieties), cubic 4-folds (\cite{MR0453741} \cite{MR818549}) and so on.
  \item For general complete intersections in projective spaces, the generalized Hodge conjecture for the middle cohomology is equivalent to the generalized Bloch conjecture, assuming the Lefschetz standard conjecture (\cf \cite{HodgeBloch}).
  \item \cite{MR1205880} deals with some complete intersection surfaces with an automorphism group. See also \cite{MR1133322} for a similar result about Calabi-Yau 3-folds.
  \item There are some results for abelian varieties (\cf \cite{MR1485922}  \cite{MR1831630} \cite{MR1923969}).
 \end{itemize}
\end{rmks}

We would like to make the following well-known remark which says that the gap between the usual Hodge conjecture and the generalized Hodge conjecture is the problem of looking for an \emph{effective realization} of the Tate twist of the sub-Hodge structure. For more general remarks to the generalized Hodge conjecture, we refer to the papers \cite{MR894051} \cite{MR715646}.

\begin{lemma}[Hodge conjecture \vs Generalized Hodge conjecture]\label{gap}
   Let $X$ be a smooth projective variety of dimension $n$, and $L\subset H^k(X,\Q)$ be a sub-Hodge structure of Hodge coniveau at least $c$. Assume the following condition:\\
$(*)$ There exists a smooth projective variety $Y$, such that $L(c)$ is a sub-Hodge structure of $H^{k-2c}(Y,\Q)$.\\
Then the usual Hodge conjecture for $Y\times X$ implies the generalized Hodge conjecture for $L$.
\end{lemma}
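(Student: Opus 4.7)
The plan is to turn the hypothesis $(*)$ into an actual correspondence from $Y$ to $X$ whose image is $L$. The key inputs are (i) the semisimplicity of the category of rational polarizable pure Hodge structures, which lets us split off $L(c)$ in $H^{k-2c}(Y,\Q)$; (ii) the translation between morphisms of Hodge structures on cohomology and Hodge classes on the product (via Poincar\'e duality and K\"unneth); and (iii) the Hodge conjecture on $Y\times X$, which upgrades such a Hodge class to an algebraic cycle.

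First I would build the morphism of Hodge structures that realizes $L$. By polarizability, the inclusion $L(c)\hookrightarrow H^{k-2c}(Y,\Q)$ admits a Hodge-theoretic retraction $\pi: H^{k-2c}(Y,\Q)\twoheadrightarrow L(c)$. Composing $\pi\otimes\id_{\Q(-c)}$ with the canonical identification $L(c)(-c)\isom L$ and the inclusion $L\hookrightarrow H^k(X,\Q)$ yields a morphism of weight-$k$ pure Hodge structures
$$\psi: H^{k-2c}(Y,\Q)(-c)\lra{} H^k(X,\Q)$$
whose image is exactly $L$.

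Next I would convert $\psi$ into a Hodge class on $Y\times X$. Set $m=\dim Y$. Poincar\'e duality on $Y$ and the K\"unneth formula identify
$$\Hom_{\text{HS}}\!\bigl(H^{k-2c}(Y,\Q),\,H^{k}(X,\Q)(c)\bigr)\isom H^{2m-k+2c}(Y,\Q)\otimes H^{k}(X,\Q)\,(m+c),$$
which is a direct summand of $H^{2m+2c}(Y\times X,\Q)(m+c)$. A morphism of pure Hodge structures corresponds to a class of type $(0,0)$, \ie a rational Hodge class of bidegree $(m+c,m+c)$ in $H^{2m+2c}(Y\times X,\Q)$. Let $\gamma\in H^{2m+2c}(Y\times X,\Q)$ be the Hodge class obtained from $\psi$ in this way; by construction, the associated correspondence action
$$[\gamma]_*: H^{k-2c}(Y,\Q)\lra{} H^{k}(X,\Q),\qquad \alpha\mapsto (p_X)_*\bigl(\gamma\cup p_Y^*\alpha\bigr)$$
coincides with $\psi$ up to the Tate twist, and in particular its image contains $L$.

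Finally I would invoke the hypothesis. Applying the usual Hodge conjecture to $Y\times X$ in codimension $m+c$, we write $\gamma=[Z]$ for some $Z\in\CH^{m+c}(Y\times X)_\Q$. Replacing $Z$ by a representative and taking $Z':=p_X(|Z|)\subset X$, the dimension bound $\dim Z=\dim(Y\times X)-(m+c)=n-c$ forces $\codim_X Z'\geq c$. Since $[Z]_*\alpha$ is, for every $\alpha$, supported on $Z'$, we conclude that $\im(\psi)=L$ is contained in the kernel of the restriction $H^k(X,\Q)\to H^k(X\setminus Z',\Q)$, which is exactly Conjecture~\ref{GHC} for $L$.

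The only delicate point, and therefore the main thing to get right, is the bookkeeping of Tate twists together with the verification that the Hodge class $\gamma$ extracted via Poincar\'e duality really does represent the morphism $\psi$ as a correspondence; the rest is a direct application of polarizability, Hodge conjecture on $Y\times X$, and the dimension estimate on the image of an algebraic cycle under projection.
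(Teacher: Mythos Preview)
Your proof is correct and follows essentially the same route as the paper's: split off $L(c)$ inside $H^{k-2c}(Y,\Q)$ by polarizability, twist and include to get a morphism of Hodge structures $H^{k-2c}(Y,\Q)(-c)\to H^k(X,\Q)$ with image $L$, interpret it as a Hodge class on $Y\times X$ via Poincar\'e duality and K\"unneth, apply the Hodge conjecture to make it algebraic, and conclude by the dimension bound on the projection to $X$. The only cosmetic difference is that you spell out the Poincar\'e/K\"unneth bookkeeping explicitly, whereas the paper defers this to its Remark on morphisms of Hodge structures as Hodge classes.
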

Before the proof of the lemma, let us recall the following fundamental interpretation of a morphism between two Hodge structures as a Hodge class in their $\Hom$-space viewed as a Hodge structure (\cf \cite{MR1967689}):

\begin{rmk}\label{morphism}\upshape
Let $k_1, k_2\in \Z$ be of the same parity, and we set $c=\frac{k_2-k_1}{2}\in \Z$. Let $L_1$, $L_2$ be two rational pure Hodge structures of weights $k_1, k_2$ respectively. The canonical identification $\Hom_{\Q}(L_1, L_2)= L_1^*\otimes_\Q L_2$ induces on $\Hom_{\Q}(L_1, L_2)$ a Hodge structure of weight $k_2-k_1$. Then a linear map $f\in \Hom_{\Q}(L_1, L_2)$ is a morphism of Hodge structures of bidegree $(c,c)$ if and only if $f$ is a Hodge class of degree $2c$ with respect to this natural Hodge structure. In the geometric setting, let $X$, $Y$ be smooth projective varieties of dimension $n, m$ respectively, and $f: H^{k_1}(X,\Q)\to H^{k_2}(Y,\Q)$ be a $\Q$-linear map, then $f$ is a morphism of Hodge structures of bidegree $(c,c)$ if and only if $f$ is a Hodge class of degree $2c$ in $\Hom_{\Q}\left(H^{k_1}(X,\Q), H^{k_2}(Y,\Q)\right)=H^{k_1}(X,\Q)^*\otimes_\Q H^{k_2}(Y,\Q)\cisom H^{2n-k_1}(X,\Q)(n)\otimes_\Q H^{k_2}(Y,\Q)$, which is a direct factor of $H^{2n-k_1+k_2}(X\times Y,\Q)(n)$ by the K\"unneth formula. For such Hodge class $f$, if moreover
there is an algebraic cycle $\sZ\in \CH^{n+c}(X\times Y)_\Q$ such that the fundamental class $[\sZ]\in H^{2n+2c}(X\times Y,\Q)$ coincides with $f$ when projecting to the K\"unneth factor $H^{2n-k_1}(X,\Q)(n)\otimes_\Q H^{k_2}(Y,\Q)$, then we say that $f$ is \emph{algebraic}, meaning that $f$ is induced by an algebraic correspondence. In particular, the (usual) Hodge conjecture implies that any morphism of Hodge structures $f: H^{k_1}(X,\Q)\to H^{k_2}(Y,\Q)$ is in fact algebraic.
\end{rmk}

\begin{prooflemma} (\cf \cite{MR894051}.)
 Since the Hodge structure $H^{k-2c}(Y, \Q)$ is polarizable, $L(c)$ is a direct factor of $H^{k-2c}(Y, \Q)$ in the category of Hodge structures. In particular, there is a projection  $H^{k-2c}(Y, \Q)\surj L(c)$, which is a morphism of Hodge structures. Twisting it by $\Q(-c)$, and composing with the inclusion of $L$ into $H^k(X,\Q)$, we get a morphism of Hodge structures $$H^{k-2c}(Y, \Q)(-c)\to H^k(X,\Q)$$ with image $L$.
Now apply the usual Hodge conjecture for $Y\times X$ (\cf Remark \ref{morphism}), we conclude that this morphism of Hodge structures is algebraic, \ie it is the correspondence induced by an algebraic cycle $\sZ\in \CH_{n-c}(Y\times X)$. Therefore, $L=\im\left([\sZ]_*:H^{k-2c}(Y)(-c)\to H^{k}(X)\right)$ is supported on $Z:= \Supp\left(\pr_2(\sZ)\right)$, the support of the image of $\sZ$ under the projection to $X$. Clearly, every irreducible component of $Z$ is of dimension at most $\dim(\sZ)=n-c$, hence of codimension at least $c$.
\end{prooflemma}

\begin{rmks}\upshape
The condition $(*)$ in the above lemma is always satisfied when $k=2c$ (trivial) or $k=2c+1$ (thanks to the anti-equivalence of categories between weight 1 effective rational Hodge structures and abelian varieties up to isogenies). Moreover, by the Lefschetz theorem of hyperplane sections, we can reduce to the case of $\dim(Y)=k-2c$ by taking successive general hyperplane sections on $Y$. 
\end{rmks}

\section{Kernel of the Cup Product Map with Big Classes}
%
%

For a smooth projective variety $X$, let $H^{2i}(X,\Q)_{\text{alg}}$ be the $\Q$-subspace of $H^{2i}(X,\Q)$ generated by the fundamental classes of algebraic cycles of codimension $i$. In $H^{2i}(X,\Q)_{\text{alg}}$ sits the \emph{effective cone} generated by the effective algebraic cycles of codimension $i$. Making an analogue of the divisor case, we define a cohomology class to be \emph{big}, if it is in the interior (when passing to the real coefficients) of the effective cone. Here is the practical definition that we will use in this paper.

\begin{defi}[Big cohomology classes]\label{bigness}\upshape
Let $X$ be a smooth projective variety, and let $0\leq i\leq \dim(X)$ be an integer. A cohomology class $\gamma\in H^{2i}(X, \Q)(i)$ is called \emph{big}, if some of its positive multiples is of the form
$$m\gamma=[A]^i+[E] ~~\text{in } H^{2i}(X, \Z)(i), ~~ m\in \N^*,$$
where $A$ is an ample divisor, $E$ is an effective algebraic cycle of codimension $i$, and [-] means the cohomology class of an algebraic cycle.
\end{defi}

To simplify the notation, we will mostly suppress the Tate twists from now on, except when we want to highlight it.

Note that if the class $\gamma\in H^{2i}(X, \Q)$ is `ample' in the sense that $\gamma\in [A]^i\cdot \Q_{>0}$ for some ample divisor $A$, then the hard Lefschetz theorem says $\cup \gamma: H^{n-i}(X,\Q)\to H^{n+i}(X,\Q)$ is an isomorphism; in particular, the kernel is trivial. But when $\gamma$ is only big, the kernel could be non-trivial as the following example shows.

\begin{expl}\label{example}\upshape
 Let $V$ be a smooth projective variety of dimension $n$ with a smooth subvariety $Z$ of codimension $c\geq 2$. Let $X:=\Bl_ZV\lra{\tau} V$ be the blow-up of $V$ along $Z$, and $E$ be the exceptional divisor:
\begin{displaymath}
 \xymatrix{E \cart\ar[r]^{\iota} \ar[d]_{p}
  & \Bl_ZV \ar[d]^{\tau}\\
Z\ar[r]_{i} & V}
\end{displaymath}
We consider $\gamma=\tau^*(A)$, the pull-back of an ample divisor
class $A$ on $V$. Thanks to the following formula for the cohomology of blow-ups (\cf \cite{MR1967689} Theorem 7.31):
\begin{equation*}
 \tau^*\oplus\bigoplus_{i=0}^{c-2}\iota_*\xi^ip^*: H^{n-1}(V)\oplus\bigoplus_{i=0}^{c-2}H^{n-3-2i}(Z)\lra{\isom}H^{n-1}(X)
\end{equation*}
where $\xi=\sO_E(1)$, we find that $$\Ker\left(\cup\gamma: H^{n-1}(X)\to H^{n+1}(X)\right)\isom\bigoplus_{i=0}^{c-2}\Ker\left(\cup A|_Z: H^{n-3-2i}(Z)\to H^{n-1-2i}(Z)\right),$$ which does not vanish in general.
\end{expl}

Despite $\Ker(\cup \gamma)\neq 0$ in general, we still expect the positivity assumption on $\gamma$ would imply the kernel is `small' in certain sense. For instance in the above example, we observe that the kernel is in fact supported in the exceptional divisor $E$; in particular, $\Ker(\cup \gamma)$ is of Hodge coniveau at least 1.

The following Lemma \ref{observation} generalizes this example. This observation is the starting point of the paper. The idea of using the Hodge-Riemann bilinear relations goes back to \cite{MR2585582}. 
\begin{lemma}[Observation] \label{observation}
Let $X$ be a smooth projective variety of dimension $n$, $0\leq k\leq n$ be an integer, and $\gamma\in H^{2n-2k}(X,\Q)$ be a big cohomology class. Let $L$ be the kernel of the following morphism of `cup product with $\gamma$':
$$\cup\gamma: H^k(X,\Q)\to H^{2n-k}(X,\Q).$$
Then $L$ is a sub-Hodge structure of $H^k(X,\Q)$ of Hodge coniveau at least 1.
\end{lemma}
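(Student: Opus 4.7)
The plan is to mimic and extend the computation sketched in the introduction for the case $k=n-1$. Since $\gamma$ is big, some positive multiple writes $m\gamma = [A]^{n-k} + [E]$ with $A$ ample and $E = \sum_i m_i E_i$ effective of codimension $n-k$. In particular $\gamma$ is a Hodge class of type $(n-k,n-k)$, so cup product with $\gamma$ is a morphism of rational Hodge structures, and hence $L = \Ker(\cup\gamma)$ is automatically a sub-Hodge structure of $H^k(X,\Q)$. It remains to show its Hodge coniveau is $\geq 1$, which by Hodge symmetry reduces to showing $L^{k,0} = 0$.

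Fix $\alpha \in L^{k,0}$. First I would observe that $\alpha$ is automatically primitive with respect to $A$: indeed $[A]^{n-k+1}\cup\alpha \in H^{n+1,n-k+1}(X) = 0$ for pure type reasons (no $(p,q)$-component with $p>n$). Next, from $\gamma\cup\alpha = 0$, cupping with $\bar\alpha$ and integrating over $X$ yields, via the projection formula applied to resolutions of singularities $\tau_i:\tilde E_i \to E_i$ of the components of $E$,
$$\int_X [A]^{n-k}\cdot\alpha\bar\alpha \;+\; \sum_i m_i \int_{\tilde E_i} \tau_i^*(\alpha)\cdot\overline{\tau_i^*(\alpha)} \;=\; 0.$$
The second Hodge--Riemann bilinear relation applied on $X$ to the primitive $(k,0)$-class $\alpha$ gives $(-1)^{k(k-1)/2} i^k \int_X [A]^{n-k}\alpha\bar\alpha > 0$ whenever $\alpha \neq 0$. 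On each smooth projective $k$-fold $\tilde E_i$, the pullback $\tau_i^*(\alpha) \in H^{k,0}(\tilde E_i)$ is a holomorphic top-form, and the positivity $(-1)^{k(k-1)/2} i^k \int_{\tilde E_i} \tau_i^*(\alpha)\overline{\tau_i^*(\alpha)} \geq 0$ holds (this is the top-degree instance of Hodge--Riemann, where every class is automatically primitive). Multiplying the displayed identity by $(-1)^{k(k-1)/2} i^k$ then expresses $0$ as a strictly positive quantity plus a sum of non-negative ones, forcing $\alpha = 0$.

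There is no serious obstacle here, since the introduction already performs this computation in the special case $k=n-1$. The only points to verify carefully are (i) the automatic primitivity of any $(k,0)$-class for $k\leq n$, which is a simple type count, and (ii) that the sign $(-1)^{k(k-1)/2} i^k$ which makes Hodge--Riemann positive on the ambient $X$ also makes $\tau_i^*(\alpha)\overline{\tau_i^*(\alpha)}$ a non-negative volume form on each $\tilde E_i$. Both hold because throughout we apply the positivity of Hodge--Riemann to primitive $(k,0)$-classes on smooth projective varieties of dimension at least $k$.
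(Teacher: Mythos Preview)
Your proof is correct and follows essentially the same argument as the paper's own proof: both reduce to showing $L^{k,0}=0$, both use the decomposition $m\gamma=[A]^{n-k}+[E]$, and both derive a contradiction from the second Hodge--Riemann bilinear relations applied to the primitive $(k,0)$-class $\alpha$ on $X$ and to the pullbacks $\tau_i^*(\alpha)$ on the resolutions $\tilde E_i$. The only difference is cosmetic: you spell out the type-count reason why any $(k,0)$-class is primitive, whereas the paper simply asserts this.
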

\begin{proof}
  Replacing $\gamma$ by a multiple if necessary, we can suppose that $$\gamma=[A]^{n-k}+[E]~~\text{in } H^{2n-2k}(X, \Z),$$
where $A=\sO_X(1)$ is a general hyperplane section, and $E$ is an effective algebraic cycle of codimension $n-k$.
Since $\cup \gamma$ is clearly a morphism of Hodge structures, its kernel $L$ is of course a sub-Hodge structure. Therefore to prove the Hodge coniveau 1 assertion, which means $L^{k,0}=0$, it suffices to show that for any class $\alpha\in H^{k,0}(X)$, if $\gamma\cup\alpha=0$, then $\alpha=0$. Let $\alpha$ be such a class.

Let $E=\sum_im_iE_i$ with $m_i\in \N^*$ be the decomposition into linear combination of prime divisors, and $\tau_i:\tilde E_i\to E_i$ be a resolution of singularities for each $i$. As $\gamma \cup \alpha=0$, we have
\begin{eqnarray*}
 0 &=& \int_X \gamma\alpha\bar\alpha \\
&=& \int_X [A]^{n-k}\alpha\bar\alpha + \int_X [E]\alpha\bar\alpha\\
&=& \int_X [A]^{n-k}\alpha\bar\alpha + \sum_i m_i\int_{\tilde E_i}\tau_i^*(\alpha)\bar{\tau_i^*(\alpha)}
\end{eqnarray*}
However, since $\alpha$ is primitive in $H^k(X,\C)$, by the second Hodge-Riemann bilinear relation (\cf \cite{MR1967689}),
\begin{equation}\label{positive1}
 (-1)^{\frac{k(k-1)}{2}}i^k\int_X [A]^{n-k}\alpha\bar\alpha\geq0,
\end{equation}
with equality holds only when $\alpha=0$.

Similarly, since $\tau_i^*(\alpha)$ is also of type $(k,0)$, in particular primitive in $H^k(\tilde E_i, \C)$, we have again by the second Hodge-Riemann bilinear relation that for each $i$,
\begin{equation}\label{positive2}
 (-1)^{\frac{k(k-1)}{2}}i^k\int_{\tilde E_i}\tau_i^*(\alpha)\bar{\tau_i^*(\alpha)}\geq 0.
\end{equation}
As the sum of the left hand sides of (\ref{positive1}) and (\ref{positive2}) is zero, we have an equality in (\ref{positive1}), \ie $\alpha=0$, and hence $L$ is of Hodge coniveau at least 1.
\end{proof}

Combining the above observation \ref{observation} with the generalized Hodge conjecture \ref{GHC}, one gets the following conjecture which is the main subject of the paper.

\begin{conj}\label{mainquestion}
 Let $X$ be a smooth projective variety of dimension $n$, $0\leq k\leq n$ be an integer, and $\gamma\in H^{2n-2k}(X,\Q)$ be a big cohomology class (in the sense of Definition \ref{bigness}). Let $L$ be the kernel of the following morphism of `cup product with $\gamma$':
$$\cup\gamma: H^k(X,\Q)\to H^{2n-k}(X,\Q).$$
Then $L$ is supported on a divisor of $X$, \ie $L\subset \Ker\left(H^k(X,\Q)\to H^k(X\backslash Z, \Q)\right)$ for some $Z$ closed algebraic subset of codimension 1.
\end{conj}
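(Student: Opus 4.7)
My plan is to carry out the three-step program announced in the introduction, working under the Lefschetz standard conjecture.

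\emph{Step 1 (geometrization).} Replacing $\gamma$ by a positive multiple, write $\gamma = [A]^{n-k} + [E]$ with $A$ an ample divisor and $E = \sum_{i} m_i E_i$ an effective algebraic cycle of codimension $n-k$; pick resolutions $\tau_i : \tilde E_i \to E_i \subset X$, set $\tilde E := \bigsqcup_i \tilde E_i$ and let $\tilde\iota : \tilde E \to X$ be the natural map. For $\alpha \in L$, the relation $\gamma \cup \alpha = 0$ reads
$$
L_X^{n-k} \alpha \;=\; - \sum_i m_i (\tilde\iota_i)_* \tilde\iota_i^* \alpha \qquad \text{in } H^{2n-k}(X, \Q),
$$
so the hard Lefschetz isomorphism on $X$ forces $\tilde\iota^*|_L : L \to H^k(\tilde E, \Q)$ to be injective. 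Since each $\tilde E_i$ has dimension $k$, the hard Lefschetz isomorphism $L_{\tilde E} : H^{k-2}(\tilde E, \Q)(-1) \xrightarrow{\isom} H^k(\tilde E, \Q)$ is available on $\tilde E$, and composing its inverse $\Lambda_{\tilde E}$ with $\tilde\iota^*$ produces an embedding of Hodge structures
$$
L(1) \;\hookrightarrow\; H^{k-2}(\tilde E, \Q).
$$
By Lemma \ref{gap}, this alone would reduce the theorem to the usual Hodge conjecture applied to $\tilde E \times X$.

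\emph{Steps 2 and 3 (adjoint correspondences and projector).} Assuming only the Lefschetz standard conjecture, both $\Lambda_X$ (the inverse of $L_X^{n-k}$ acting on $H^k(X, \Q)$) and $\Lambda_{\tilde E}$ are algebraic correspondences. Combining them with the always-algebraic Gysin map $\tilde\iota_*$, the scalar multiplication $\sigma$ by $m_i$ on the $i$-th summand, and the ample cup product $L_{\tilde E}$, one obtains an algebraic correspondence
$$
\Phi \;:=\; - \Lambda_X \circ \tilde\iota_* \circ \sigma \circ L_{\tilde E} \;\in\; \CH^{k+1}(\tilde E \times X)_\Q,
$$
and the reconstruction formula $\alpha = -\Lambda_X \sum_i m_i (\tilde\iota_i)_* \tilde\iota_i^* \alpha$ for $\alpha \in L$ shows $L \subset \im \Phi_*$. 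To identify $L$ exactly, I then construct an algebraic orthogonal projector $\pi_L : H^k(X, \Q) \twoheadrightarrow L$ by taking adjoints of $\cup \gamma$ and of $\Phi$ with respect to the polarization induced by $A$, and by verifying that the divisor-supported sub-Hodge structure $M := \im \Phi_*$ is transverse to $L^\perp$ in $H^k(X, \Q)$, in the sense that $M + L^\perp = H^k(X, \Q)$. The composition $\pi_L \circ \Phi$ is then an algebraic correspondence in $\CH^{k+1}(\tilde E \times X)_\Q$ whose image is exactly $L$; since such a cycle has dimension $n-1$, its support projects to a closed algebraic subset $Z \subset X$ of codimension at least $1$, and $L \subset \Ker(j^* : H^k(X, \Q) \to H^k(X \setminus Z, \Q))$, which is the desired conclusion.

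\emph{Main obstacle.} The hardest step is the algebraic construction of the orthogonal projector $\pi_L$: one needs the adjoints of $\cup \gamma$ and of $\Phi$ with respect to the $A$-polarization to be algebraic, which rests on the Lefschetz standard conjecture together with a systematic use of the adjoint-correspondence calculus of \S 4. The transversality $M + L^\perp = H^k(X, \Q)$, which is what guarantees that $\pi_L \circ \Phi$ still maps onto all of $L$ while keeping its image inside the divisor-supported $M$, is the second delicate point, and is precisely where the bigness of $\gamma$ — rather than merely the coniveau-$1$ property of $L$ proved in Lemma \ref{observation} — enters in a non-formal way.
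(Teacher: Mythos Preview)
There is a genuine gap in Step~1. For a smooth projective variety $\tilde E_i$ of dimension $k$, the Lefschetz operator $L_{\tilde E_i}:H^{k-2}(\tilde E_i,\Q)\to H^{k}(\tilde E_i,\Q)$ is only injective, not an isomorphism: hard Lefschetz on a $k$-fold gives $L_{\tilde E}^{\,2}:H^{k-2}\xrightarrow{\isom}H^{k+2}$, not $L_{\tilde E}:H^{k-2}\xrightarrow{\isom}H^{k}$. The cokernel is precisely $H^{k}(\tilde E)_{\prim}$, which is typically nonzero. Consequently your ``inverse'' $\Lambda_{\tilde E}$ does not exist on all of $H^k(\tilde E,\Q)$, the embedding $L(1)\hookrightarrow H^{k-2}(\tilde E,\Q)$ is not established, and the reconstruction formula $\alpha=-\Lambda_X\sum_i m_i(\tilde\iota_i)_*\tilde\iota_i^*\alpha$ only shows $L\subset\im(\Psi_*)$ with $\Psi:=-\Lambda_X\circ\tilde\iota_*\circ\sigma:H^k(\tilde E)\to H^k(X)$; since $\im(L_{\tilde E})\subsetneq H^k(\tilde E)$, this does \emph{not} imply $L\subset\im(\Phi_*)$ for your $\Phi=\Psi\circ L_{\tilde E}$. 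And $\Psi$ itself is realised by a cycle of dimension $n$ in $\tilde E\times X$, so it gives no divisor support.

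This is exactly the obstacle that forces the paper to pass to a $(k-1)$-dimensional auxiliary variety. In Proposition~\ref{geometrization} the paper takes $Y=B\sqcup\bigsqcup_i H_i$ with $B\subset X$ a general complete intersection of $n-k+1$ hyperplanes and $H_i\subset\tilde E_i$ a general hyperplane section; the point is that on a $(k-1)$-fold one genuinely has $L_Y:H^{k-2}(Y)\xrightarrow{\isom}H^{k}(Y)$. The injectivity $L\cap\Ker(\mu^*)=0$ is then \emph{not} a formal consequence of $\gamma\cup\alpha=0$ as in your argument, but requires the second Hodge--Riemann bilinear relations on $X$ and on each $\tilde E_i$ (this is where bigness really enters, and why the extra component $B$ is needed: a class killed by all $H_i$ could still be a nonzero primitive class on $X$). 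Once that injectivity is in place, your Steps~2--3 are essentially the paper's: one sets $C=(L_Y)^{-1}\circ\mu^*$, obtains $\im(C^\dagger)+L^\perp=H^k(X)$ by adjunction, builds the algebraic orthogonal projector $\pr_L$ from the self-adjoint operator $(L_X^{n-k})^{-1}\circ s_X\circ(\gamma\cup)$ via a polynomial construction, and concludes with $\pr_L\circ C^\dagger$.
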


In the presence of Lemma \ref{observation}, the cases of $k=0, 1, n$ are trivial, and the case of $k=2$ follows from the Lefschetz theorem on (1,1)-classes.

We would like to show first (see Corollary \ref{byHodge}) that Conjecture \ref{mainquestion} is implied by the usual Hodge conjecture. The key point is the following Proposition \ref{geometrization} of effective realization of $L(1)$. The argument appeared in C. Voisin's paper \cite{MR2585582}. We reproduce her argument here since the construction will be useful in \S\ref{sectionproof}, where we will show that Conjecture \ref{mainquestion} is in fact a consequence of the Lefschetz standard conjecture.

\begin{prop}[Effective realization]\label{geometrization}
 Let $X$, $k$, $\gamma$, $L$ be as above. Then there exists a (not necessarily connected) smooth projective variety $Y$ of dimension $k-1$ with a morphism $\miu: Y\to X$, such that the composition $L\inj H^k(X, \Q)\lra{\miu^*}H^k(Y, \Q)$ is injective.\\
 In particular, $L(1)$ is a sub-Hodge structure of $H^{k-2}(Y, \Q)$.
\end{prop}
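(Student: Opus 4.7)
The strategy is to mimic the Hodge--Riemann argument of Lemma~\ref{observation}, applied Hodge-component by Hodge-component to a hypothetical kernel element, with $Y$ engineered so that the hypothesis $\miu^*\alpha=0$ delivers primitivity with respect to honestly ample (not merely nef) classes wherever it is needed.

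Up to replacing $\gamma$ by a positive multiple, write $\gamma=[A]^{n-k}+[E]$ with $A$ ample and $E=\sum_i m_iE_i$ effective of codimension $n-k$, as in Lemma~\ref{observation}. I would build $Y$ as follows: let $\iota:Y_A\inj X$ be a smooth complete intersection of $n-k+1$ general members of $|A|$, so that $\dim Y_A=k-1$ and $[Y_A]=[A]^{n-k+1}$ in $H^{2(n-k+1)}(X,\Q)$. For each $i$ pick a resolution $\tau_i:\tilde E_i\to E_i\subset X$ (of dimension $k$), choose an ample class $\omega_i$ on $\tilde E_i$, and via Bertini take $j_i:\tilde F_i\inj\tilde E_i$ a smooth member of $|\omega_i|$ (of dimension $k-1$). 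Set $Y:=Y_A\sqcup\bigsqcup_i\tilde F_i$ with $\miu:Y\to X$ the obvious proper morphism.

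Now suppose $\alpha\in L$ satisfies $\miu^*\alpha=0$. Since $L$ is a sub-Hodge structure and $\miu^*$ a morphism of Hodge structures, we may assume $\alpha\in L\cap H^{p,q}(X)$ with $p+q=k$. The vanishing splits as $\iota^*\alpha=0$ and $j_i^*\tau_i^*\alpha=0$ for every $i$. Applying $\iota_*$ and the projection formula gives $[A]^{n-k+1}\cup\alpha=0$, so $\alpha$ is primitive in $H^k(X,\C)$ with respect to the K\"ahler class $A$; likewise $(j_i)_*$ yields $\omega_i\cup\tau_i^*\alpha=0$ in $H^{k+2}(\tilde E_i,\C)$, so $\tau_i^*\alpha$ is primitive in the top cohomology $H^k(\tilde E_i,\C)$ with respect to the K\"ahler class $\omega_i$. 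The relation $\gamma\cup\alpha=0$ together with the projection formula then delivers
\begin{equation*}
\int_X [A]^{n-k}\alpha\,\bar\alpha+\sum_i m_i\int_{\tilde E_i}\tau_i^*\alpha\cdot\overline{\tau_i^*\alpha}=0.
\end{equation*}
After multiplying by $(-1)^{k(k-1)/2}i^{p-q}$, the second Hodge--Riemann bilinear relations on $(X,A)$ and on each $(\tilde E_i,\omega_i)$ (the latter in top degree) render every summand $\geq 0$, with the first strictly positive unless $\alpha=0$; hence $\alpha=0$ and $\miu^*|_L$ is injective.

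Finally, each connected component of $Y$ is smooth projective of dimension $k-1$, so the hard Lefschetz theorem on each component supplies an isomorphism of Hodge structures $H^{k-2}(Y,\Q)(-1)\isom H^k(Y,\Q)$, and composing its inverse with $\miu^*(1)$ yields the desired embedding $L(1)\inj H^{k-2}(Y,\Q)$. The one point requiring care is the choice of $\omega_i$: taking $\tilde F_i$ cut out by $\tau_i^*[A]$ would only give a nef class on $\tilde E_i$, whereas the top-degree Hodge--Riemann positivity genuinely needs an ample polarization to secure the strict inequality that forces $\alpha=0$; choosing an independent ample $\omega_i$ on $\tilde E_i$ sidesteps this while preserving $\dim\tilde F_i=k-1$.
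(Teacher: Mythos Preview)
Your proof is correct and follows essentially the same approach as the paper's: the variety $Y$ is constructed identically (a complete intersection of $n-k+1$ hyperplanes together with ample hypersurfaces in the resolved components $\tilde E_i$), and the key step is the same Hodge--Riemann positivity argument, just organized contrapositively (you first deduce primitivity of $\alpha$ and each $\tau_i^*\alpha$ from $\mu^*\alpha=0$ and then conclude all summands are nonnegative, whereas the paper assumes $\alpha\neq 0$, finds a negative summand, and infers non-primitivity of some $\tau_i^*\alpha$). One terminological quibble: $H^k(\tilde E_i)$ is the \emph{middle}, not top, cohomology of the $k$-fold $\tilde E_i$, but your use of Hodge--Riemann there is correct.
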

\begin{proof}
 Adopting the notations in Lemma \ref{observation}, up to replacing $\gamma$ by a positive multiple, we can assume $$\gamma=[A]^{n-k}+[E]~~\text{in } H^{2n-2k}(X, \Z),$$
where $A=\sO_X(1)$ is a general hyperplane section and $E=\sum_im_iE_i$ with $m_i\in \N^*$ is an effective algebraic cycle of dimension $k$, and $\tau_i:\tilde E_i\to E_i$ be a resolution of singularities for each $i$. Let $B$ be the intersection of $(n-k+1)$ general hyperplane sections of $X$, and $H_i$ be a general section of a very ample line bundle on $\tilde E_i$, in particular, $B$ and $H_i$ are irreducible smooth projective varieties of dimension $k-1$.

Let $Y:=B\sqcup\bigsqcup_iH_i$ be their disjoint union, and $\miu: Y\to X$ be the natural morphism. We claim:\\
$(**)$ The composition $L\inj H^k(X,\Q)\lra{\miu^*}H^k(Y,\Q)$ is injective, \ie $L\cap \Ker(\miu^*)=\{0\}$.

Indeed, since $L\cap \Ker(\miu^*)$ is a sub-Hodge structure, it suffices to show, for each $(p,q)$ with $p+q=k$, that if $\alpha\in H^{p,q}(X)$ satisfies $\alpha\cup \gamma=0$ and $\miu^*(\alpha)=0$, then we have $\alpha=0$. Suppose the contrary: $\alpha\neq 0$.

Since the composition  $H^k(X,\Q)\lra{i_B^*} H^k(B,\Q)\lra{{i_B}_*}H^{2n-k+2}(X,\Q)$ is exactly the Lefschetz operator $[B]=[A]^{n-k+1}$ and the second morphism is an isomorphism by Lefschetz's hyperplane theorem, we find that $\Ker\left(i_B^*:H^k(X,\Q)\to H^k(B,\Q)\right)=H^k(X,\Q)_{\prim}$, where $i_B=\miu|_B$ is the natural inclusion of $B$ into $X$; in particular, $\alpha$ is a primitive class of type $(p,q)$.

As in the proof of Lemma \ref{observation}, firstly we have
\begin{eqnarray*}
 0 &=& \int_X \gamma\alpha\bar\alpha \hspace{0.8cm}\text{(since $\gamma \cup \alpha=0$)}\\
&=& \int_X [A]^{n-k}\alpha\bar\alpha + \int_X [E]\alpha\bar\alpha\\
&=& \int_X [A]^{n-k}\alpha\bar\alpha + \sum_i m_i\int_{\tilde E_i}\tau_i^*(\alpha)\bar{\tau_i^*(\alpha)}
\end{eqnarray*}
However, since $\alpha\neq 0$ is primitive of type $(p,q)$, by the second Hodge-Riemann bilinear relation (\cf \cite{MR1967689}), we have
\begin{equation*}
 (-1)^{\frac{k(k-1)}{2}}i^{p-q}\int_X [A]^{n-k}\alpha\bar\alpha>0.
\end{equation*}
Therefore, since the sum is zero, there exists $i$, such that
\begin{equation*}
 (-1)^{\frac{k(k-1)}{2}}i^{p-q}\int_{\tilde E_i}\tau_i^*(\alpha)\bar{\tau_i^*(\alpha)}<0.
\end{equation*}
By the second Hodge-Riemann bilinear relation of $\tilde E_i$, we deduce that $\tau_i^*(\alpha)$ is NOT primitive in $H^k(\tilde E_i,\Q)$, \ie $[H_i]\cup \tau_i^*(\alpha)\neq 0$. In particular, $(\miu|_{H_i})^*(\alpha)\neq 0$, giving a contradiction to the assumption that $\alpha\in \Ker(\miu^*)$. So the claim $(**)$ follows, and this is exactly what we want.

As for the last assertion, composing the injective morphism of Hodge structures obtained above $L\inj H^k(Y, \Q)$, with the inverse of the hard Lefschetz isomorphism (as Hodge structures) $H^{k-2}(Y,\Q)(-1)\lra{\isom} H^k(Y,\Q)$, we get an inclusion of Hodge structures $L(1)\inj H^{k-2}(Y,\Q)$ as desired.
\end{proof}

\begin{cor}\label{byHodge}
 Conjecture \ref{mainquestion} is implied by the usual Hodge conjecture.
\end{cor}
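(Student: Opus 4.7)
The plan is to chain together Proposition \ref{geometrization} and Lemma \ref{gap}. By Lemma \ref{observation}, the sub-Hodge structure $L$ has Hodge coniveau at least $1$, so the relevant value of the coniveau parameter in Lemma \ref{gap} is $c=1$.

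First I would dispatch the trivial cases separately, as already indicated in the paper. For $k=0,1$, a non-zero effective Hodge structure of weight $k$ cannot have coniveau $\geq 1$, so $L=0$. For $k=n$, a big class $\gamma \in H^{0}(X,\Q)$ is a non-zero scalar, hence $\cup\gamma$ is an isomorphism and $L=0$. For $k=2$, the structure $L$ is concentrated in bidegree $(1,1)$, so $L\subset H^{1,1}(X,\Q)\cap H^{2}(X,\Q)$ consists of Hodge classes, which are supported on a divisor by the Lefschetz theorem on $(1,1)$-classes (independently of any form of the Hodge conjecture).

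In the remaining range $3\leq k\leq n-1$, I would apply Proposition \ref{geometrization} to the data $(X,k,\gamma,L)$ to obtain a smooth projective variety $Y$ of dimension $k-1$ and a morphism $\mu: Y\to X$ realizing $L(1)$ as a sub-Hodge structure of $H^{k-2}(Y,\Q)$. This verifies hypothesis $(*)$ of Lemma \ref{gap} for the value $c=1$. Invoking Lemma \ref{gap}, the usual Hodge conjecture applied to the product $Y\times X$ implies the generalized Hodge conjecture for $L$ in coniveau $1$, which is exactly the statement that $L$ is supported on a closed algebraic subset $Z\subset X$ of codimension at least $1$. This is the content of Conjecture \ref{mainquestion}.

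There is no real obstacle: both ingredients have already been established, and the corollary is a straightforward composition of them. The only point to check is that the auxiliary variety $Y$ produced by Proposition \ref{geometrization} is genuinely a valid witness for hypothesis $(*)$ of Lemma \ref{gap}, which is immediate by comparing the two statements.
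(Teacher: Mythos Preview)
Your proposal is correct and follows essentially the same route as the paper: verify condition $(*)$ of Lemma \ref{gap} via the effective realization of $L(1)$ provided by Proposition \ref{geometrization}, then conclude. The separate treatment of the cases $k=0,1,2,n$ is harmless but unnecessary, since the combination of Proposition \ref{geometrization} and Lemma \ref{gap} already covers them (with $L=0$ in the trivial cases).
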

\begin{proof}
To reach the generalized Hodge conjecture from the usual Hodge conjecture, we use Lemma \ref{gap} which explains the gap between them, so we only have to check in our situation the condition $(*)$ in Lemma \ref{gap}.

However, the above Proposition \ref{geometrization} provides an inclusion of Hodge structures $L(1)\inj H^{k-2}(Y,\Q)$, and this is exactly the condition $(*)$ in Lemma \ref{gap}.
\end{proof}

The rest of the paper is devoted to the proof of the main theorem \ref{mainthm0}, which says that Conjecture \ref{mainquestion} is in fact implied by an \apriori much weaker conjecture, namely the Lefschetz standard conjecture.

\section{Lefschetz Standard Conjecture implies Conjecture \ref{mainquestion}}\label{sectionproof}

We first recall the Lefschetz standard conjecture. Then in the second subsection we deal with the construction and the formal properties of the \emph{adjoint} of an algebraic correspondence, which incorporates the strength of the Lefschetz standard conjecture; while in the third subsection, by rather formal arguments, we will deduce our main theorem \ref{mainthm} from  Proposition \ref{geometrization}, which embeds the Tate twist of the sub-Hodge structure in question into the cohomology of some smooth projective variety.

\subsection{The Lefschetz Standard Conjecture}\label{lefsection}
Here we gather some well-known general remarks concerning the Lefschetz standard conjecture, for a more complete treatment, see \cite{MR0292838} \cite{MR1265519}.
Let $X$ be a smooth projective variety of dimension $n$, $\sO_X(1)$ be a very ample divisor which is chosen to be the polarization of $X$.
Let $\xi=c_1(\sO_X(1))\in H^2(X,\Q)$. Define the Lefschetz operator $$L_X=\cup \xi : H^k(X,\Q)\to H^{k+2}(X,\Q)$$ to be cup product with the first Chern class of the polarization.  The hard Lefschetz theorem asserts that for any integer $k\in \{0, \cdots,n\}$, the morphism
$$L_X^{n-k}:H^k(X,\Q)\to H^{2n-k}(X,\Q)$$ is an isomorphism.
Note that this isomorphism is in fact \emph{algebraic} (see Remark \ref{morphism}), which means that it is the correspondence induced by a dimension $k$ algebraic cycle ${\Delta_X}_*(\sO_X(1)^{n-k})\in \CH_k(X\times X)$, where $\Delta_X:X\inj X\times X$ is the diagonal inclusion.
In his paper \cite{MR0268189}, Grothendieck conjectures that the inverse of the Lefschetz isomorphism is also algebraic.
\begin{conj}[Lefschetz standard conjecture]\label{standard}
In the above situation, there exists a codimension $k$ algebraic cycle with rational coefficients $\sZ\in \CH^k(X\times X)_\Q$, such that the induced correspondence
$$[\sZ]_*: H^{2n-k}(X,\Q)\to H^k(X,\Q)$$ is the inverse of the isomorphism $L_X^{n-k}$ defined above.
\end{conj}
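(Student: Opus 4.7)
The plan is to address Conjecture \ref{standard}, the Lefschetz standard conjecture $B(X,k)$; I should flag at once that this is a classical open problem, so what follows is a structural attack plan rather than a complete resolution. The goal is to produce an algebraic cycle $\sZ\in \CH^k(X\times X)_\Q$ whose induced correspondence inverts $L_X^{n-k}$ on $H^{2n-k}(X,\Q)$.

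First I would make two standard reductions. Using the Lefschetz decomposition $H^k(X,\Q)=\bigoplus_j L_X^j H^{k-2j}(X,\Q)_{\prim}$, the inverse of $L_X^{n-k}$ can be built from the orthogonal projectors onto the primitive summands together with iterated powers of $L_X$, the latter being algebraic via the cycles ${\Delta_X}_*(\xi^{n-k+j})$. This reduces the problem to constructing algebraic projectors onto primitive cohomology. Equivalently, as formalized by Kleiman, it suffices to exhibit the K\"unneth projector $\pi_k \in H^{2n}(X\times X,\Q)$ onto the $H^k$-summand as an algebraic class; once $\pi_k$ is algebraic, the desired $\sZ$ can be extracted by composing it with algebraic operators built from $L_X$.

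Next I would proceed case by case according to geometric structure, trying to leverage known constructions. For $k=0,1$ and $k\geq 2n-1$ the conjecture is essentially trivial (the inverses are tautological), and for $k=2$ it follows from the Lefschetz $(1,1)$ theorem applied to the diagonal. For abelian varieties Lieberman's argument produces the required cycles via the Fourier--Mukai kernel (the Poincar\'e bundle); for varieties dominated by products of curves, Arapura's correspondence-theoretic method applies; for compact hyperk\"ahler manifolds one can invoke the Charles--Markman approach via the Looijenga--Lunts--Verbitsky algebra; for cellular/homogeneous varieties the K\"unneth components are manifestly algebraic from the cell decomposition. My plan in each case is to identify the appropriate auxiliary structure (a Mumford--Tate group we understand, a semisimple Lie algebra acting on cohomology, a motivic decomposition of the diagonal) and read off the projector from that structure.

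The main obstacle, which no existing method overcomes in general, is that for arbitrary smooth projective $X$ there is no known systematic way to write down an algebraic cycle realizing the abstract orthogonal projector onto primitive cohomology: this projector exists as a Hodge class in $H^{2n}(X\times X,\Q)$ (this is precisely where the usual Hodge conjecture would close the gap, \cf Remark \ref{morphism}), but producing it as the fundamental class of an actual cycle requires geometric input beyond what Hodge theory alone provides. Consequently, my realistic plan beyond the catalogued cases is not a uniform proof but rather to work variety-by-variety, exploiting extra structure such as large automorphism groups, finite-dimensionality of the Chow motive in the sense of Kimura--O'Sullivan, or Deninger--Murre-type motivic decompositions; in the absence of such input, the conjecture as stated should be expected to remain open.
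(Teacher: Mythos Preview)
There is nothing to compare your proposal against: the paper does not prove this statement. It is labeled and treated as a \emph{conjecture} (Conjecture~\ref{standard}), not a theorem, and the paper never attempts a proof. Rather, the Lefschetz standard conjecture is taken as a standing \emph{hypothesis} throughout \S\ref{sectionproof}: it is invoked to guarantee the algebraicity of $s_X$, of the adjoint correspondences $\sZ^\dagger$, and ultimately of the projector $\pr_L$ in Proposition~\ref{projectorL}, all feeding into the proof of Theorem~\ref{mainthm}. The paper's only commentary on the status of the conjecture is Remarks~\ref{RkStand}, which lists equivalent formulations, the implication from the usual Hodge conjecture, and the known cases (curves, surfaces, abelian varieties, flag varieties, certain hyperk\"ahler varieties).

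Your proposal correctly identifies the conjecture as open and gives a reasonable survey of partial results and reductions, several of which coincide with what the paper records in Remarks~\ref{RkStand}. But you should not frame this as a ``proof proposal'' for the statement: the appropriate response here is simply to observe that no proof is expected or supplied, and that the role of Conjecture~\ref{standard} in the paper is that of an assumption, not a result.
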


\begin{rmks}\upshape\label{RkStand}
We list some basic facts about the standard conjecture. Some of them will be used in the sequel.
\begin{itemize}
 \item There are several equivalent versions of the Lefschetz standard conjecture (\cf \cite{MR0292838} \cite{MR1265519}). Besides the one stated above, let us just mention another equivalent one which says that the projectors $\pi_{L_X^i H^{k-2i}(X)_{\prim}}$, with respect to the Lefschetz decomposition $H^k(X)=\bigoplus_{i\geq \max\{0,k-n\}}L_X^i H^{k-2i}(X)_{\prim}$, are algebraic.
 \item The Lefschetz standard conjecture implies the K\"unneth standard conjecture which says that all the projectors $\pi^k:  H^*(X)\surj H^k(X)\inj H^*(X)$ are algebraic.

 \item The Lefschetz standard conjecture is implied by the usual Hodge conjecture. Indeed, $(L_X^{n-k})^{-1}$ is a morphism of Hodge structures, by Remark \ref{morphism} the corresponding
cohomology class of $X\times X$ is a Hodge class (it is an \emph{absolute} Hodge class\footnote{Roughly speaking, they `descend' with the field of definition of $X$, \cf \cite{MR654325}.} in fact), and the Hodge conjecture claims the existence of an algebraic cycle inducing $(L_X^{n-k})^{-1}$.
 \item The Lefschetz standard conjecture in degree 1, namely the algebraicity of $(L^{n-1})^{-1}: H^{2n-1}(X,\Q)\to H^1(X,\Q)$, is implied by the Lefschetz theorem of (1,1)-class on $X\times X$.
Thus the Lefschetz standard conjecture is known for curves and surfaces. Besides, other known cases include abelian varieties, generalized flag varieties. Note that this conjecture is stable by taking products, hyperplane sections (\cf \cite{MR1265519}). Let us also mention the recent work \cite{StandConjHK} verifying this conjecture for certain type of irreducible holomorphic symplectic varieties.
\end{itemize}
\end{rmks}

\subsection{Adjoint correspondences}\label{adjointsubsection}
For any smooth projective variety $X$ of dimension $n$, with polarization $\sO_X(1)$ and corresponding Lefschetz operator $L_X$, let us consider the following operator $s_X$ on $H^*(X,\Q)$, which changes the signs of the factors in the Lefschetz decomposition to retain the positivity property as in the primitive part.

\begin{defi}[Operator $s_X$ on $H^*(X)$]\upshape
For any integer $k\in \{0, \cdots,n\}$,
the action of the operator $s_X$ on $H^k(X)=\bigoplus_{0\leq i\leq \lfloor\frac{k}{2}\rfloor}L_X^i H^{k-2i}(X)_{\prim}$ is defined as multiply by $(-1)^{\frac{k(k-1)}{2}}\cdot(-1)^i$ on the direct factor $L_X^i H^{k-2i}(X)_{\prim}$ in the Lefschetz decomposition. Let the action of $s_X$ on $H^{2n-k}(X)$ be the action induced from the one on $H^k(X)$ via the hard Lefschetz isomorphism.
\end{defi}
\begin{rmks}\label{rmk-s}\upshape From the above definition, we note that
\begin{itemize}
 \item $s_X$ is an involution: $s_X\circ s_X=\id$;
 \item $s_X$ commutes with the Lefschetz operator $L_X\circ s_X=s_X\circ L_X$;
 \item The transpose of $s_X$ is $s_X$;
 \item $s_X$ is \emph{rational}, \ie it comes from a $\Q$-linear operator on $H^*(X,\Q)$, the reason is that the Lefschetz decomposition is rational.
\end{itemize}

\end{rmks}

\begin{lemma}[Algebraicity of $s_X$]
 Assuming the Lefschetz standard conjecture, the operator $s_X$ is algebraic, \ie it is induced by an algebraic cycle in $\CH^n(X\times X)_\Q$.
\end{lemma}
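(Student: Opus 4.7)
The plan is to realize $s_X$ as an explicit finite $\Q$-linear combination of the projectors onto the components of the Lefschetz decomposition of $H^*(X,\Q)$, and then to invoke the equivalent form of the Lefschetz standard conjecture recorded in Remarks \ref{RkStand} to conclude that each such projector is algebraic.

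First I would fix notation: for each $k\in\{0,\ldots,n\}$ and each $i$ with $0\leq i\leq\lfloor k/2\rfloor$, let $\pi_{k,i}\colon H^*(X,\Q)\surj L_X^i H^{k-2i}(X)_{\prim}\inj H^*(X,\Q)$ denote the projector onto the corresponding Lefschetz component (sitting in degree $k$), extended by zero on the other degrees. From the very definition of $s_X$ on $H^k(X)$,
\[
s_X|_{H^k(X)}=(-1)^{k(k-1)/2}\sum_{i=0}^{\lfloor k/2\rfloor}(-1)^i\,\pi_{k,i}.
\]
For degrees $>n$, the action of $s_X$ on $H^{2n-k}(X)$ is by definition transported from $H^k(X)$ via the hard Lefschetz isomorphism $L_X^{n-k}$; since $L_X^{n-k}$ identifies $L_X^j H^{k-2j}(X)_{\prim}$ with $L_X^{n-k+j} H^{k-2j}(X)_{\prim}$, the same prescription delivers a weighted sum of projectors onto the summands of the Lefschetz decomposition of $H^{2n-k}(X)$. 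Gathering the contributions from all degrees, one obtains a single identity $s_X=\sum_{a,b}c_{a,b}\,\tilde\pi_{a,b}$ with $c_{a,b}\in\Q$, where the $\tilde\pi_{a,b}$ are the projectors onto the summands $L_X^a H^{b}(X)_{\prim}$ of the full Lefschetz decomposition of $H^*(X,\Q)$, a finite collection.

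To conclude, I would apply the equivalent formulation of the Lefschetz standard conjecture listed in Remarks \ref{RkStand}: precisely these Lefschetz-decomposition projectors $\tilde\pi_{a,b}$ are algebraic, hence induced by cycles in $\CH^n(X\times X)_\Q$. A rational linear combination of algebraic correspondences being still algebraic, $s_X$ is itself induced by a cycle in $\CH^n(X\times X)_\Q$, as desired. No step is genuinely delicate: the whole argument is formal once the equivalent version of the standard conjecture is accepted as input. The only point worth double-checking is that the prescription on $H^{2n-k}(X)$ via hard Lefschetz matches the Lefschetz-component description one would write directly in degree $2n-k$ (\ie that no unexpected sign mismatch arises), which is immediate from the compatibility of $L_X$ with the Lefschetz decomposition recorded in Remarks \ref{rmk-s}.
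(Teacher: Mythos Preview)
Your proposal is correct and follows essentially the same approach as the paper: both write $s_X$ as a finite $\Q$-linear combination of the projectors onto the summands of the Lefschetz decomposition of $H^*(X,\Q)$ (treating degrees $\leq n$ and $>n$ separately, the latter via hard Lefschetz), and then invoke the equivalent form of the Lefschetz standard conjecture in Remarks \ref{RkStand} to conclude that each projector is algebraic. The only cosmetic difference is that the paper writes the formula with explicit K\"unneth projectors $\pi^k$ composed with the Lefschetz-piece projectors, whereas you absorb the K\"unneth projection into your $\pi_{k,i}$ and $\tilde\pi_{a,b}$; this is immaterial since the Lefschetz standard conjecture yields the K\"unneth projectors as well (second point of Remarks \ref{RkStand}).
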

\begin{proof}
 We can write $$s_X=\sum_{k=0}^{n}(\sum_{i=0}^{\lfloor\frac{k}{2}\rfloor}(-1)^i\pi_{L_X^i H^{k-2i}(X)_{\prim}})\cdot(-1)^{\frac{k(k-1)}{2}}\pi^k+ \sum_{k=0}^{n}(\sum_{i=0}^{\lfloor\frac{k}{2}\rfloor}(-1)^i\pi_{L_X^{n-k+i} H^{k-2i}(X)_{\prim}})\cdot(-1)^{\frac{k(k-1)}{2}}\pi^{2n-k}.$$
By the first two points of Remark \ref{RkStand}, assuming the Lefschetz standard conjecture, all the projectors appearing in the above formula, hence $s_X$ itself, are algebraic.
\end{proof}

Usually, for $0\leq k\leq n$, people use the pairing $<-,->$ on $H^k(X)$ defined by
$$<x,y>:=\int_XL_X^{n-k}xy ~~~~~\text{for any $x,y\in H^k(X)$}$$
Although it is non-degenerate thanks to the hard Lefschetz isomorphism, it does not have the positivity property enjoyed by the primitive part any more. In the language of Hodge theory, we say that this pairing is NOT a polarization. To retain the positivity property,
we define the following modified bilinear pairing on $H^k(X)$:
\begin{equation}\label{polarization}
(x,y)_{H^k(X)}:= \int_XL_X^{n-k}x\cdot s_X(y)
\end{equation}
for any $x,y \in H^k(X)$. We sometimes suppress the subscript to write $(-,-)$ if we don't want to mention the Hodge structure explicitly. Then
\begin{itemize}
 \item $(-,-)_{H^k(X)}$ is rational;
 \item $(x,y)_{H^k(X)}=(-1)^k(y,x)_{H^k(X)}$ ~for any $x, y\in H^k(X)$;
\end{itemize}
Moreover, by the Hodge-Riemann bilinear relations (\cf \cite{MR1967689}), we find that for any $0\leq k\leq n$, any $x\in H^{p,q}(X)$ and $y\in H^{p',q'}(X)$ with $p+q=p'+q'=k$, we have
\begin{itemize}
 \item $(x,y)=\int_XL_X^{n-k}x\cdot s_X(y)=0$ unless $(p,q)=(q',p')$;
 \item $(i^{p-q}x,\bar x)=\int_XL_X^{n-k}\cdot i^{p-q}x\cdot s_X(\bar x)>0$ for any $0\neq x\in H^{p,q}(X)$.
\end{itemize}
Therefore the bilinear pairing (\ref{polarization}) on $H^k(X)$ is a \emph{polarization} (\cf \cite{MR1967689}) of the Hodge structure $H^k(X,\Q)$. As for the Hodge structure $H^{2n-k}(X,\Q)$, we use the polarization induced from the one on $H^k(X,\Q)$ via the hard Lefschetz isomorphism.

Throughout this paper, we will always use this polarization (\ref{polarization}) on the cohomology groups of any polarized smooth projective variety.

\begin{rmk}\label{advantage}\upshape
The advantage of using the polarizations $(-,-)$ instead of the usual pairings $<-,->$ can be summarized in the following very vague analogue: \emph{as long as we stay\footnote{That is, all the vector spaces considered are Hodge structures and all the relevant morphisms between them are morphisms of Hodge structures. In particular, all the subspaces should be sub-Hodge structures.} in the category of polarizable Hodge structures equipped with the polarizations above, to do linear algebra we can pretend that the spaces are euclidean spaces equipped with positive definite scalar products.} To illustrate this intuition as well as for later use, we  want to recall here several basic properties of polarizations of Hodge structures, and more analogues can be found in the rest of this paper. Let $H$ be a Hodge structure with polarization $(-,-)$, and $L$ be a sub-Hodge structure, then (\cf \cite{MR1967689})
\begin{itemize}
 \item $(-,-)|_L$ gives a polarization of $L$;
 \item $L^\perp$ is a sub-Hodge structure with polarization $(-,-)|_{L^\perp}$;
 \item $L\cap L^\perp=\{0\}$, thus $L\oplus L^\perp=H$.
\end{itemize}
\end{rmk}

Here comes the basic terminology that we will use in the following.
\begin{prop-def}[Adjoint correspondence]\label{adjoint}
Let $X$, $Y$ be smooth projective varieties of dimension $n$, $m$ respectively, and $-n\leq r\leq m$ be an integer. Given $\sZ\in \CH^{n+r}(X\times Y)_\Q$ an algebraic cycle with rational coefficients, viewed as a correspondence (\cf \cite{MR1644323}) from $X$ to  $Y$, it induces morphisms on cohomology groups for any $k\in \{0,\cdots,2n\}$: $$C:=[\sZ]_*: H^k(X,\Q)\to H^{k+2r}(Y,\Q).$$
Assuming the Lefschetz standard conjecture, then there exists an algebraic cycle with coefficients $\sZ^\dagger\in \CH^{m-r}(Y\times X)_\Q$, such that as a correspondence from $Y$ to $X$, for any $k\in \{0,\cdots,2n\}$, the induced morphism on cohomology groups:
$$C^\dagger:= [\sZ^\dagger]_*: H^{k+2r}(Y,\Q)\to H^k(X,\Q)$$
satisfies
\begin{equation}\label{adjointeqn}
 (C\alpha, \beta)_{H^{k+2r}(Y)}=(\alpha, C^\dagger\beta)_{H^k(X)}
\end{equation}
for any $\alpha\in H^k(X)$ and any $\beta\in H^{k+2r}(Y)$, where $(-,-)$ denotes the polarization of Hodge structures fixed in (\ref{polarization}).\\
We call $\sZ^\dagger$ an \emph{adjoint correspondence} of $\sZ$, and also $C^\dagger$ the \emph{adjoint (cohomological) correspondence} of $C$.
\end{prop-def}
\begin{proof}
 Since the Lefschetz standard conjecture implies the K\"unneth standard conjecture (\cf Remark \ref{RkStand}), it suffices to construct for each $k\in \{0,\cdots,2n\}$, an algebraic cycle $\sZ^\dagger_k\in \CH^{m-r}(Y\times X)_\Q$ such that (\ref{adjointeqn}) is satisfied. Indeed, we could take $\sZ^\dagger=\sum_k\pi^k_X\circ\sZ^\dagger_k\circ\pi_Y^{k+2r}$, where $\circ$ means composition of correspondences (\cf \cite{MR1644323}) and $\pi$ are K\"unneth projectors which are algebraic by assumption.

Now we construct $\sZ_k^\dagger\in \CH^{m-r}(Y\times X)_\Q$. For simplicity, we give the formula in the case that $0\leq k\leq n$ and $0\leq l:=k+2r\leq m$; the other cases follow immediately since we know that the inverse of the hard Lefschetz isomorphism is given by an algebraic correspondence. For any $\alpha\in H^k(X)$, $\beta\in H^l(Y)$, we have:
\begin{eqnarray*}
 (C\alpha,\beta)_{H^l(Y)} = & \int_Y [\sZ]_*(\alpha)\cdot L_Y^{m-l}\circ s_Y(\beta) & (\text{Definition (\ref{polarization})})\\
= & \int_X\alpha\cdot [\sZ]^*\circ L_Y^{m-l}\circ s_Y(\beta) & (\text{projection formula})\\
= & \int_X L_X^{n-k}\alpha\cdot s_X\circ (L_X^{n-k})^{-1}\circ s_X\circ [\sZ]^*\circ L_Y^{m-l}\circ s_Y(\beta) & (\text{Remark \ref{rmk-s}})\\
= & (\alpha, (L_X^{n-k})^{-1}\circ s_X\circ [\sZ]^*\circ L_Y^{m-l}\circ s_Y(\beta))_{H^k(X)} & (\text{Definition (\ref{polarization})})
\end{eqnarray*}
The $s$-operators and the inverse of the Lefschetz operator are supposed to be algebraic by the Lefschetz standard conjecture as the preceding lemma shows. We use the same notation to denote the algebraic cycles inducing them.
Therefore, we can take
\begin{equation}\label{adjoitcycle}
 \sZ^\dagger=(L_X^{n-k})^{-1}\circ s_X\circ {}^{t}\!\sZ\circ L_Y^{m-l}\circ s_Y
\end{equation}
where ${}^{t}\!\sZ\in \CH^{n+r}(Y\times X)_\Q$ is the \emph{transpose} of the correspondence $\sZ\in \CH^{n+r}(X\times Y)_\Q$ (\cf \cite{MR1644323}), and $\circ$ means the composition of correspondences. $C^\dagger$ is defined to be the cohomological correspondence induced by $\sZ^\dagger$.
\end{proof}
\begin{rmk}\upshape
Although the adjoint correspondence $\sZ^\dagger$ of $\sZ$ is not uniquely determined as an algebraic cycle modulo rational equivalence, the adjoint (cohomological) correspondence $C^\dagger$ of $C$ is uniquely determined as a cohomological class in $H^*(Y\times X)$, since the polarization is non-degenerate.
\end{rmk}

As expected, we have immediately:
\begin{lemma}\label{easylemma} Let $X$, $Y$, $r$, $\sZ$, $C$ be as above, then
\begin{itemize}
 \item For any $k\in \{0,\cdots,2n\}$, $\alpha\in H^k(X)$, $\beta\in H^{k+2r}(Y)$, we have $$(C^\dagger\beta,\alpha)_{H^k(X)}=(\beta, C\alpha)_{H^{k+2r}(Y)}.$$ 
 \item The operator $\dagger$ is an \emph{involution}:$$C^{\dagger\dagger}=C.$$
 \item If we have a third smooth projective variety $Z$, and an algebraic correspondence from $Y$ to $Z$: $\sZ'\in \CH(Y\times Z)_\Q$, and let $C'$ be the corresponding cohomological correspondence, then we have a \emph{functoriality}:
$$(C'C)^\dagger=C^\dagger C'^\dagger$$
\end{itemize}

\end{lemma}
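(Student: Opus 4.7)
The three statements are formal consequences of the defining identity $(C\alpha,\beta)=(\alpha,C^\dagger\beta)$ (equation~(\ref{adjointeqn})), the symmetry property of the polarization recorded just before Proposition-Definition~\ref{adjoint} (namely $(x,y)_{H^k(X)}=(-1)^k(y,x)_{H^k(X)}$), and the non-degeneracy of the polarization. The plan is to derive them in turn, using only these formal ingredients; no new algebraic cycles need to be constructed, since $C^\dagger$ has already been produced in Proposition-Definition~\ref{adjoint}.

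For the first bullet, I would start from (\ref{adjointeqn}) and apply the sign-symmetry of the polarization twice:
\begin{equation*}
(C^\dagger\beta,\alpha)_{H^k(X)}=(-1)^k(\alpha,C^\dagger\beta)_{H^k(X)}=(-1)^k(C\alpha,\beta)_{H^{k+2r}(Y)}=(-1)^k(-1)^{k+2r}(\beta,C\alpha)_{H^{k+2r}(Y)},
\end{equation*}
and the sign $(-1)^{2k+2r}=1$ collapses to give the claimed identity.

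For the second bullet (involutivity), I would apply (\ref{adjointeqn}) to the correspondence $C^\dagger$ in place of $C$, obtaining $(C^\dagger\beta,\alpha)=(\beta,C^{\dagger\dagger}\alpha)$. Combining with the identity just proved in the first bullet gives $(\beta,C\alpha)=(\beta,C^{\dagger\dagger}\alpha)$ for every $\alpha\in H^k(X)$ and $\beta\in H^{k+2r}(Y)$. Non-degeneracy of the polarization on $H^{k+2r}(Y)$ then forces $C\alpha=C^{\dagger\dagger}\alpha$ for all $\alpha$, i.e.\ $C^{\dagger\dagger}=C$.

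For the third bullet (contravariant functoriality), let $\gamma\in H^{k+2r+2r'}(Z)$ where $C':H^{k+2r}(Y)\to H^{k+2r+2r'}(Z)$. Computing in two ways using (\ref{adjointeqn}),
\begin{equation*}
(\alpha,(C'C)^\dagger\gamma)_{H^k(X)}=(C'C\alpha,\gamma)_{H^{k+2r+2r'}(Z)}=(C\alpha,C'^\dagger\gamma)_{H^{k+2r}(Y)}=(\alpha,C^\dagger C'^\dagger\gamma)_{H^k(X)},
\end{equation*}
and non-degeneracy again yields $(C'C)^\dagger=C^\dagger C'^\dagger$. There is no real obstacle here; the only thing to be careful about is keeping track of the degree shifts (and the corresponding sign $(-1)^k$) when invoking the symmetry of the polarization, so that every instance of $(\,,\,)$ is applied on the cohomology group of the correct degree.
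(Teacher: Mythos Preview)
Your proof is correct and essentially identical to the paper's own argument: both derive the three assertions from the defining adjoint identity~(\ref{adjointeqn}), the sign-symmetry $(x,y)=(-1)^k(y,x)$, and non-degeneracy of the polarization, via the same short chains of equalities. The only cosmetic difference is that in the first bullet the paper absorbs the sign in one step (writing $(-1)^{k+2r}$ directly since $(-1)^k=(-1)^{k+2r}$), whereas you carry both factors and cancel them at the end.
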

\begin{proof}
Indeed,
\begin{eqnarray*}
 (C^\dagger\beta,\alpha)_{H^k(X)}&=&(-1)^k(\alpha, C^\dagger\beta)_{H^k(X)}\\
&=&(-1)^{k+2r}(C\alpha, \beta)_{H^{k+2r}(Y)} \hspace{0.5cm}(\text{by (\ref{adjointeqn})})\\
&=&(\beta, C\alpha)_{H^{k+2r}(Y)}
\end{eqnarray*}
gives the first assertion, and
$$(\beta, C^{\dagger\dagger}\alpha)_{H^{k+2r}(Y)}=(C^\dagger\beta,\alpha)_{H^k(X)}=(\beta, C\alpha)_{H^{k+2r}(Y)}$$
yields the second one by the non-degeneracy of the polarization.
Similarly,
$$(\alpha, (C'C)^\dagger\gamma)=(C'C\alpha, \gamma)=(C\alpha, C'^\dagger\gamma)=(\alpha,C^\dagger C'^\dagger\gamma)$$
gives the third assertion.
\end{proof}


The following formal property will play an important role in the final part of our argument. It appeared in \cite{MR2082666}, Lemma 5. We recall that the restriction of a polarization on a Hodge structure to a sub-Hodge structure is non-degenerate (\cf Remark \ref{advantage}).

\begin{prop}[Invariance of rank]\label{invrank}
Let $X$, $Y$, $\sZ$, $C$ as above. We have
$$\rank(C)=\rank(C^\dagger)=\rank(CC^\dagger)=\rank(C^\dagger C).$$
In particular, $$\Ker(CC^\dagger)=\Ker(C^\dagger);$$
$$\Ker(C^\dagger C)=\Ker(C);$$
$$\im(CC^\dagger)=\im(C);$$
$$\im(C^\dagger C)=\im(C^\dagger).$$
\end{prop}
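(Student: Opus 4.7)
The plan is to exploit exactly the principle highlighted in Remark \ref{advantage}: although the polarization $(-,-)$ on a full cohomology group $H^k(X)$ need not be positive definite, its restriction to any sub-Hodge structure is non-degenerate and, combined with the splitting $H = L \oplus L^\perp$, allows us to manipulate morphisms as if we were working with Euclidean spaces. All four ranks will collapse to a common value via this principle.

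First I would establish the identity $\Ker(C) = \im(C^\dagger)^\perp$ inside $H^k(X)$ (and symmetrically $\Ker(C^\dagger) = \im(C)^\perp$ in $H^{k+2r}(Y)$). This is immediate from the defining relation $(C\alpha,\beta) = (\alpha, C^\dagger\beta)$ together with the non-degeneracy of the polarizations: $\alpha \in \Ker(C)$ iff $(C\alpha,\beta)=0$ for all $\beta$ iff $(\alpha, C^\dagger\beta) = 0$ for all $\beta$ iff $\alpha \perp \im(C^\dagger)$. Since $\im(C^\dagger)$ is a sub-Hodge structure of $H^k(X)$ (being the image of a morphism of Hodge structures), the restricted polarization is non-degenerate on it, so by Remark \ref{advantage} we get the orthogonal decomposition $H^k(X) = \im(C^\dagger) \oplus \im(C^\dagger)^\perp = \im(C^\dagger) \oplus \Ker(C)$. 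Counting dimensions yields $\rank(C) = \dim H^k(X) - \dim \Ker(C) = \dim \im(C^\dagger) = \rank(C^\dagger)$.

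Next I would tackle $\Ker(CC^\dagger) = \Ker(C^\dagger)$, which is the step that genuinely uses positivity rather than mere non-degeneracy of the polarization on sub-Hodge structures. The inclusion $\Ker(C^\dagger) \subset \Ker(CC^\dagger)$ is trivial. For the reverse, suppose $CC^\dagger\beta = 0$. Setting $v := C^\dagger\beta \in \im(C^\dagger)$ and applying the adjoint relation gives
\begin{equation*}
 0 = (CC^\dagger\beta, \beta')_{H^{k+2r}(Y)} = (v, C^\dagger\beta')_{H^k(X)} \quad \text{for all } \beta' \in H^{k+2r}(Y),
\end{equation*}
so $v \perp \im(C^\dagger)$. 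Thus $v \in \im(C^\dagger) \cap \im(C^\dagger)^\perp$, and since the polarization restricted to the sub-Hodge structure $\im(C^\dagger)$ is non-degenerate, this intersection is zero. Hence $C^\dagger\beta = 0$. The symmetric argument (interchanging $C$ and $C^\dagger$, which is legitimate by the involutivity $C^{\dagger\dagger}=C$ of Lemma \ref{easylemma}) gives $\Ker(C^\dagger C) = \Ker(C)$. Taking dimensions then produces $\rank(CC^\dagger) = \rank(C^\dagger)$ and $\rank(C^\dagger C) = \rank(C)$, so all four ranks agree.

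Finally, the claimed image equalities are formal consequences of the rank identities: the tautological inclusions $\im(CC^\dagger) \subset \im(C)$ and $\im(C^\dagger C) \subset \im(C^\dagger)$ are equalities of finite-dimensional subspaces of equal dimension. The main conceptual obstacle in this proof is purely the one addressed in the second step — the need to use that the polarization restricts non-degenerately to $\im(C^\dagger)$ (a sub-Hodge structure by virtue of $C^\dagger$ being algebraic, hence a morphism of Hodge structures, which is itself the output of the Lefschetz standard conjecture via Proposition-Definition \ref{adjoint}); once this is granted, everything reduces to standard linear algebra in the Hodge-theoretic setting.
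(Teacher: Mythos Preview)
Your proof is correct and follows essentially the same route as the paper's. The paper reduces everything to the single claim $\Ker(C^\dagger C)=\Ker(C)$ and proves it by the identical mechanism you use (if $C^\dagger C\alpha=0$ then $C\alpha\perp\im(C)$, and non-degeneracy of the polarization on the sub-Hodge structure $\im(C)$ forces $C\alpha=0$); your version simply works with $CC^\dagger$ first and spells out the ``obvious'' equality $\rank(C)=\rank(C^\dagger)$ via $\Ker(C)=\im(C^\dagger)^\perp$, which the paper leaves implicit.
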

\begin{proof}
 We only need to show $\Ker(C^\dagger C)=\Ker(C)$. Indeed, replacing $C$ by $C^\dagger$ gives another equality for kernels since $C^{\dagger\dagger}=C$, then combining the obvious fact that $\rank(C)=\rank(C^\dagger)$ we get all the equalities of ranks, and the equalities of images follow immediately.

Now $\Ker(C^\dagger C)\supset\Ker(C)$ is obvious. For the other inclusion, let $\alpha\in \Ker(C^\dagger C)$, we have $(C\alpha,C\alpha')=(C^\dagger C\alpha,\alpha')=0$ for any $\alpha'\in H^k(X)$. However, since $C$ is induced by an algebraic correspondence, it is a morphism of Hodge structures; in particular $\im(C)$ is a sub-Hodge structure, therefore as we remarked above, the restriction $(-,-)|_{\im(C)}$ is non-degenerate, which implies $C\alpha=0$, \ie $\alpha\in \Ker(C)$. This gives the other inclusion $\Ker(C^\dagger C)\subset\Ker(C)$.
\end{proof}

\subsection{The proof of the main theorem}
We now prove the main theorem which says that Conjecture \ref{mainquestion} is implied by the Lefschetz standard conjecture:
\begin{thm}[= Theorem \ref{mainthm0}]\label{mainthm}
 Let $X$ be a smooth projective variety of dimension $n$, $0\leq k\leq n$ be an integer, and $\gamma\in H^{2n-2k}(X,\Q)$ be a big cohomology class\footnote{\cf Definition \ref{bigness}}. Let $L$ be the kernel of the following morphism of `cup product with $\gamma$':
$$\cup\gamma: H^k(X,\Q)\to H^{2n-k}(X,\Q).$$
Assuming the Lefschetz standard conjecture, then $L$ is supported on a divisor of $X$, that is, $$L\subset \Ker(H^k(X,\Q)\to H^k(X\backslash Z, \Q))$$ for some closed algebraic subset $Z$ of codimension 1.
\end{thm}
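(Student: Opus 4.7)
The plan follows the three-step outline given in the introduction. First, apply Proposition \ref{geometrization} to obtain a smooth projective variety $Y$ of dimension $k-1$ with a morphism $\mu:Y\to X$ satisfying $L\cap\Ker(\mu^*)=0$. Set $C:=\mu^*:H^k(X,\Q)\to H^k(Y,\Q)$ and $C':=\cup\gamma:H^k(X,\Q)\to H^{2n-k}(X,\Q)$, both algebraic (the latter because the big class $\gamma$ is algebraic). Assuming the Lefschetz standard conjecture, form their adjoints $C^\dagger$ and $C'^\dagger$ via Proposition-Definition \ref{adjoint}.

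Construct the orthogonal projector $p_L$ onto $L$ algebraically. Let $A:=C'^\dagger C':H^k(X,\Q)\to H^k(X,\Q)$. By Proposition \ref{invrank}, $\Ker A=\Ker C'=L$; since $A^\dagger=A$ by Lemma \ref{easylemma}, polarization duality yields $\im A\subset L^\perp$, with equality by a dimension count, so $A|_{L^\perp}$ is an automorphism of $L^\perp$. Writing the characteristic polynomial of $A$ as $\chi_A(t)=t^{\dim L}q(t)$ with $q(0)\neq 0$, Cayley--Hamilton $A^{\dim L}q(A)=0$ combined with the invertibility of $A|_{L^\perp}$ gives $q(A)|_{L^\perp}=0$, while $A|_L=0$ gives $q(A)|_L=q(0)\id_L$. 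Hence $p_L:=\frac{1}{q(0)}q(A)$ is the orthogonal projector onto $L$, represented by an algebraic cycle in $\CH^n(X\times X)_\Q$.

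Next, produce a divisor-supported sub-Hodge structure $M\subset H^k(X,\Q)$ transverse to $L^\perp$. Define $G:=C^\dagger\circ L_Y:H^{k-2}(Y,\Q)(-1)\to H^k(X,\Q)$. Applying formula (\ref{adjoitcycle}) with $m-l=(k-1)-k=-1$ and using the commutation $L_Y\circ s_Y=s_Y\circ L_Y$ (Remark \ref{rmk-s}), the $L_Y^{-1}$ appearing in $C^\dagger$ cancels against $L_Y$, so $G$ is represented by the cycle $(L_X^{n-k})^{-1}\circ s_X\circ\mu_*\circ s_Y\in\CH^k(Y\times X)_\Q$, which has dimension $n-1$. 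Its projection to $X$ therefore has codimension $\geq 1$, and $M:=\im G=\im C^\dagger$ is a sub-Hodge structure of $H^k(X,\Q)$ supported on a divisor. By Proposition \ref{invrank} together with self-adjointness, $M=\Ker(\mu^*)^\perp$; combined with $L\cap\Ker(\mu^*)=0$ this yields $L^\perp+M=H^k(X,\Q)$ by polarization duality, so the projection $p_L|_M:M\twoheadrightarrow L$ is surjective.

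Finally, the composition $\Phi:=p_L\circ G:H^{k-2}(Y,\Q)(-1)\to H^k(X,\Q)$ is algebraic, with image $p_L(M)=L$. A dimension count (intersecting, inside $Y\times X'\times X$, the codimension-$k$ pullback of the cycle for $G$ with the codimension-$n$ pullback of the cycle for $p_L$) shows that $\Phi$ is represented by a cycle of dimension at most $n-1$ in $Y\times X$, whose projection to $X$ has codimension $\geq 1$. Therefore $L=\im\Phi$ is supported on a closed algebraic subset of codimension $1$, proving the theorem. The main obstacle I anticipate is the cycle-level analysis in Step 3: justifying the cancellation of $L_Y^{-1}$ against $L_Y$ via formula (\ref{adjoitcycle}) and rigorously bounding the cycle dimensions in the compositions.
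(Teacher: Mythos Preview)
Your proposal is correct and follows essentially the same three-step strategy as the paper, with only cosmetic differences. For the projector onto $L$, the paper defines $B$ directly by $(B\alpha,\alpha')=\int_X\gamma\alpha\alpha'$, identifies $B=(L_X^{n-k})^{-1}\circ s_X\circ(\gamma\cup)$, checks self-adjointness by hand, and builds the projector via a minimal-polynomial lemma; you instead take $A=(\cup\gamma)^\dagger(\cup\gamma)$ (self-adjoint for free by Lemma~\ref{easylemma}) and use Cayley--Hamilton. For the correspondence from $Y$, the paper sets $C:=L_Y^{-1}\circ\mu^*:H^k(X)\to H^{k-2}(Y)$ and works with $C^\dagger$, whereas you set $C:=\mu^*$ and use $G:=C^\dagger\circ L_Y$; these two maps coincide, and both equal $(L_X^{n-k})^{-1}\circ s_X\circ\mu_*\circ s_Y$ as the paper's final Remark confirms. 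The composition with $\pr_L$ and the conclusion via the dimension of the resulting cycle in $\CH_{n-1}(Y\times X)_\Q$ are then identical.

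Regarding your anticipated obstacle: formula~(\ref{adjoitcycle}) is stated only for $0\le l\le m$, while your $C=\mu^*$ lands in $H^k(Y)$ with $k>\dim Y=k-1$. This is precisely why the paper folds $L_Y^{-1}$ into $C$ from the outset, so that the target $H^{k-2}(Y)$ sits in the range where (\ref{adjoitcycle}) applies directly. Your route works too once one unwinds the induced polarization on $H^k(Y)$, but the paper's choice sidesteps the bookkeeping entirely. Your final dimension count for $\Phi$ is also unnecessary: the composition $\sZ'\circ\sZ^\dagger$ lies in $\CH_{n-1}(Y\times X)_\Q$ by the very definition of composition of correspondences, so no separate intersection argument in $Y\times X\times X$ is needed.
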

\begin{proof}

Firstly, recall that in Proposition \ref{geometrization} we have constructed a (not necessarily connected) smooth projective variety $Y$ of dimension $k-1$ with a morphism $\miu: Y\to X$, and showed that the composition $L\inj H^k(X,\Q)\lra{\miu^*}H^k(Y,\Q)$ is injective, \ie $L\cap \Ker(\miu^*)=\{0\}$.

Note that the Lefschetz standard conjecture on $Y$ tells us the inverse hard Lefschetz isomorphism $(L_Y)^{-1}:H^k(Y,\Q)\lra{\isom}H^{k-2}(Y,\Q)$ is algebraic. Therefore, the composition
\begin{equation}\label{C}
 C:=(L_Y)^{-1}\circ\miu^*: H^k(X,\Q)\to H^{k-2}(Y,\Q)
\end{equation}
is algebraic, \ie $C=[\sZ]_*$ for some $\sZ\in \CH^{n-1}(X\times Y)_\Q$. The above injectivity is of course preserved, thus
$$\Ker(C)\cap L=\{0\}.$$
Taking the orthogonal complements (with respect to the fixed polarization $(-,-)_{H^k(X)}$ introduced in (\ref{polarization})) of both sides, and using the non-degeneracy of the polarization, we get:
\begin{equation}\label{surj1}
 \Ker(C)^{\perp}+L^{\perp}=H^k(X,\Q).
\end{equation}
Now consider the adjoint correspondence $C^\dagger=[\sZ^\dagger]_*: H^{k-2}(Y,\Q)\to H^k(X,\Q)$, where $\sZ^\dagger\in \CH_{n-1}(Y\times X)_\Q$ is the algebraic cycle constructed in (\ref{adjoitcycle}) using the Lefschetz standard conjecture.
By the adjoint property $(C^\dagger\alpha,\alpha')=(\alpha,C\alpha')$ for any $\alpha\in H^{k-2}(Y,\Q)$ and $\alpha'\in H^k(X,\Q)$, we find that $\im(C^\dagger)\subset\Ker(C)^\perp$. However, $\dim\Ker(C)^\perp=\dim H^k(X)-\dim\Ker(C)=\dim\im(C)=\dim\im(C^\dagger)$, so in fact $\im(C^\dagger)=\Ker(C)^\perp$. Therefore (\ref{surj1}) is equivalent to
\begin{equation}\label{surj2}
 \im(C^\dagger)+L^{\perp}=H^k(X,\Q).
\end{equation}
We first finish the proof by assuming the following Proposition \ref{projectorL}, which says that with respect to the orthogonal decomposition $H^k(X,\Q)=L\oplus L^\perp$ (\cf Remark \ref{advantage}), the orthogonal projector $$\pr_L:H^k(X,\Q)\surj L\inj H^k(X,\Q)$$ is algebraic, \ie induced by an algebraic cycle $\sZ'\in \CH^n(X\times X)_\Q$. \\
Now consider the composition of the algebraic correspondences $\sZ'\circ\sZ^\dagger$:
\begin{displaymath}
  \xymatrix{
  H^{k-2}(Y,\Q) \ar[rr]^{C^\dagger=[\sZ^\dagger]_*} \ar@{->>}[dr]
                &  &    H^k(X,\Q)\ar[rr]^{\pr_L=[\sZ']_*} \ar@{->>}[dr] & & H^k(X,\Q) \\
                & \im(C^\dagger)\ar@{^{(}->}[ur] \ar@{->>}[rr] & & L \ar@{^{(}->}[ur]            }
\end{displaymath}
where we place the cohomological correspondences in the first line, and the images of two morphisms in the second line. Then the equality (\ref{surj2}) says exactly that the induced morphism in the bottom line from $\im(C^\dagger)$ to $L$ is surjective, in other words, $$\im\left([\sZ'\circ\sZ^\dagger]_*:H^{k-2}(Y,\Q)\to H^k(X,\Q)\right)=L.$$
Therefore, $L$ is supported on $Z:= \Supp\left(\pr_2(\sZ'\circ\sZ^\dagger)\right)$: the support of the image of $\sZ'\circ\sZ^\dagger\in \CH_{n-1}(Y\times X)_\Q$ under the projection to $X$, so the dimension of each irreducible component of $Z$ is at most $n-1$, hence $L$ is supported on a divisor of $X$.
\end{proof}

To complete the proof, we only need to show the following
\begin{prop}[Orthogonal projector to $L$]\label{projectorL}
Let $X$, $\gamma$, $L$ as in the above theorem. Then for the orthogonal decomposition\footnote{See the last point of Remark \ref{advantage}.} $H^k(X,\Q)=L\oplus L^\perp$ with respect to the fixed polarization $(-,-)_{H^k(X)}$, the orthogonal projector $$\pr_L:H^k(X,\Q)\surj L\inj H^k(X,\Q)$$ is algebraic (in the sense of Remark \ref{morphism}).
\end{prop}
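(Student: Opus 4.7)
The plan is to exhibit $\pr_L$ as a polynomial (with rational coefficients) in a single algebraic, self-adjoint endomorphism of $H^k(X,\Q)$ whose kernel is exactly $L$. Bigness of $\gamma$ (Definition \ref{bigness}) already entails that $\gamma$ is the class of an algebraic cycle $\Gamma\in\CH^{n-k}(X)_\Q$, so the cup product map
$$\Phi:=\cup\gamma\colon H^k(X,\Q)\longrightarrow H^{2n-k}(X,\Q)$$
is induced by the algebraic correspondence $(\Delta_X)_*(\Gamma)\in\CH^{2n-k}(X\times X)_\Q$. Under the Lefschetz standard conjecture, Proposition-Definition \ref{adjoint} produces an algebraic adjoint $\Phi^\dagger\colon H^{2n-k}(X,\Q)\to H^k(X,\Q)$, so the composition
$$\Psi:=\Phi^\dagger\circ\Phi\colon H^k(X,\Q)\longrightarrow H^k(X,\Q)$$
is itself an algebraic endomorphism, represented by some cycle in $\CH^n(X\times X)_\Q$.

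I would then collect the structural properties of $\Psi$ that make it behave like a ``normal'' operator from the point of view of the polarization $(-,-)_{H^k(X)}$. Functoriality (Lemma \ref{easylemma}) gives $\Psi^\dagger=\Phi^\dagger\circ\Phi^{\dagger\dagger}=\Psi$, so $\Psi$ is self-adjoint. Proposition \ref{invrank} gives $\Ker\Psi=\Ker\Phi=L$, and self-adjointness forces $\im\Psi\subset(\Ker\Psi)^\perp=L^\perp$, with equality of dimensions by the rank statement of the same proposition. Since $L$ is a sub-Hodge structure, Remark \ref{advantage} supplies the orthogonal decomposition $H^k(X,\Q)=L\oplus L^\perp$, and $\Psi$ is block-diagonal with respect to it: identically zero on $L$ and an automorphism $\Psi|_{L^\perp}$ of $L^\perp$.

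Finally, I would extract $\pr_L$ from $\Psi$ by polynomial interpolation. Let $q(T)\in\Q[T]$ be the minimal polynomial of $\Psi|_{L^\perp}$; since $\Psi|_{L^\perp}$ is invertible, $q(0)\neq 0$, so $T$ and $q(T)$ are coprime in $\Q[T]$. The block-diagonal form implies that $T\cdot q(T)$ already annihilates $\Psi$, hence is (up to a unit) the minimal polynomial of $\Psi$ on $H^k(X,\Q)$. By the Chinese Remainder Theorem there exists $\pi(T)\in\Q[T]$ with $\pi(T)\equiv 1\pmod T$ and $\pi(T)\equiv 0\pmod{q(T)}$; then $\pi(\Psi)$ is the identity on $L$ and zero on $L^\perp$, so $\pi(\Psi)=\pr_L$. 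Being a $\Q$-polynomial in the algebraic endomorphism $\Psi$, the projector $\pr_L$ is itself induced by an algebraic cycle in $\CH^n(X\times X)_\Q$, which is the required algebraicity.

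The step where I expect to need the most care is the passage from ``$\Ker\Psi=L$'' to the clean block-diagonal decomposition on $L\oplus L^\perp$: it rests on the non-degeneracy of the polarization restricted to the sub-Hodge structure $L$ (Remark \ref{advantage}), which in turn is where Hodge-Riemann positivity enters essentially. This is precisely the role played by the modified pairing $(x,y)=\int_XL_X^{n-k}x\cdot s_X(y)$ introduced in \S\ref{adjointsubsection} in place of the naive Poincar\'e pairing; once this is in place, the rest is formal manipulation of algebraic correspondences and a one-line application of the Chinese Remainder Theorem.
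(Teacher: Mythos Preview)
Your proof is correct and follows essentially the same strategy as the paper: construct a self-adjoint algebraic endomorphism of $H^k(X,\Q)$ with kernel exactly $L$, then extract the orthogonal projector as a rational polynomial in that endomorphism. The only cosmetic difference is that the paper uses $B=(L_X^{n-k})^{-1}\circ s_X\circ(\gamma\cup)$ and checks $B^\dagger=B$ directly via the identity $(B\alpha,\alpha')=\int_X\gamma\alpha\alpha'$, whereas you use $\Psi=\Phi^\dagger\Phi$ and get self-adjointness for free from Lemma~\ref{easylemma}; the polynomial step (the paper's linear-algebra lemma versus your Chinese Remainder argument) is the same idea in both cases.
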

\begin{proof}
 Define $B:H^k(X,\Q)\to H^k(X,\Q)$ to be the unique morphism satisfying
$$(B\alpha, \alpha')_{H^k(X)}=\int_X\gamma\alpha\alpha'$$ for any $\alpha, \alpha'\in H^k(X)$. (Here the rationality of $B$ comes from those of $\gamma$ and $s_X$.)\\
However, by
\begin{eqnarray*}
 \int_X\gamma\alpha\alpha' &=& \int_XL_X^{n-k}\cdot (L_X^{n-k})^{-1}\circ s_X(\gamma\alpha)\cdot s_X(\alpha')\\ &=& ((L_X^{n-k})^{-1}\circ s_X(\gamma\alpha), \alpha'),
\end{eqnarray*}
we deduce that $$B=(L_X^{n-k})^{-1}\circ s_X\circ(\gamma\cup),$$
where $\gamma$ is an algebraic class, $s_X$ and $(L_X^{n-k})^{-1}$ are also induced by algebraic correspondences under the assumption of Lefschetz standard conjecture, therefore $B$ is algebraic, \ie $B=[\sW]_*$, for some $\sW\in\CH^n(X\times X)_\Q$.

Observe that $(B\alpha, \alpha') =\int_X\gamma\alpha\alpha'=(-1)^k\int_X\gamma\alpha'\alpha=(-1)^k(B\alpha', \alpha) =(\alpha,B\alpha')$, which means that $B$ is \emph{self-adjoint}: $$B^\dagger=B.$$
By Proposition \ref{invrank}, we have $\rank(B^2)=\rank(BB^\dagger)=\rank(B)$, \ie,
\begin{equation}\label{B}
 \im(B)=\im(B^2).
\end{equation}
The following elementary lemma in linear algebra allows us to construct from such an endomorphism a projector onto its image.

\begin{lemma}
 Let $V$ be a finite dimensional $\Q$-vector space, $f:V\to V$ be an endomorphism satisfying $\im(f^2)=\im(f)$. Then there exists a $\Q$-coefficient polynomial $P$ with $P(0)=0$, such that the endomorphism $g:=P(f)$ is a projector onto $\im(f)$, \ie $g^2=g$ and $\im(g)=\im(f)$. Moreover, $\Ker(f)=\Ker(g)$.
\end{lemma}
\begin{proof}
 By assumption $f|_{\im(f)}:\im(f)\to \im(f)$ is surjective hence an isomorphism. Let $Q\in\Q[T]$ be the minimal polynomial of $f|_{\im(f)}$, since $f|_{\im(f)}$ is an isomorphism, $Q(0)\neq 0$.\\
Defining $R\in\Q[T]$ to be $R[T]=-\frac{Q(T)-Q(0)}{Q(0)\cdot T}$, then
$R\left(f|_{\im(f)}\right)=\left(f|_{\im(f)}\right)^{-1}$; in other words,
 $$\left(R(f)\cdot f\right)|_{\im(f)}=\id_{\im(f)}.$$
Now we set $P\in \Q[T]$ to be $P(T)=R(T)\cdot T$. Then $P(0)=0$, and $g:=P(f)$ satisfies
\begin{equation}\label{g}
 g|_{\im(f)}=\id_{\im(f)}.
\end{equation}
However, since $P(0)=0$, we have $\im(g)\subset\im(f)$, thus (\ref{g}) implies $\im(g)=\im(f)$ and thus also $g^2=g$, \ie $g$ is a projector onto $\im(f)$.\\
Moreover, by $P(0)=0$, we have \apriori $\Ker(f)\subset\Ker(g)$; but $f$ and $g$ have the same image, thus the same rank, we deduce that $\Ker(f)=\Ker(g)$.
\end{proof}

We continue the proof of the Proposition. By (\ref{B}), we can apply the above lemma to $B$ to get a rational coefficient polynomial $P$ with $P(0)=0$, such that $P(B)$ is a projector onto $\im(B)$, and $\Ker\left(P(B)\right)=\Ker(B)$. Therefore, $P(B)$ and $\id-P(B)$ is a pair of projectors corresponding to the direct sum decomposition $$H^k(X,\Q)=\im(B)\oplus\Ker(B).$$ Moreover, we remark that the above direct sum decomposition is in fact \emph{orthogonal} with respect to $(-,-)_{H^k(X)}$: this is an immediate consequence of the self-adjoint property of $B$. \\
To conclude, we remark that $L=\Ker(\gamma\cup)=\Ker(B)$, thus $\Delta_X-P(\sW)\in \CH^n(X\times X)_\Q$ induces on the cohomology $H^k(X,\Q)$ the orthogonal projector $\id-P(B)$ onto $L$, where $\Delta_X$ denotes the diagonal class in $X\times X$, and the multiplication in $P(\sW)$ is given by composition of correspondences (NOT the intersection product).

This finishes the proof of the Proposition \ref{projectorL} and thus also the proof of the main Theorem \ref{mainthm}.
\end{proof}

\section{Final Remarks}
\begin{rmk}[Unconditional results]\upshape
Our proof of Conjecture \ref{mainquestion} using the standard conjecture is in fact unconditional in some cases. In the following discussion, let $X$, $\gamma$, $L$ be as in the main theorem \ref{mainthm}, and we adopt all the constructions and notations of its proof in the preceding section.

When $k=0, 1$, there is nothing to prove. The $k=2$ case reduces to the Lefschetz theorem on (1,1)-classes for $H^2(X,\Q)$.

When $k=3, 4, 5$, recall that the correspondence needed in the proof of the main theorem \ref{mainthm} is $\pr_L\circ C^\dagger: H^{k-2}(Y,\Q)\to H^k(X,\Q)$, and we use Lefschetz standard conjecture on $Y$ to get the algebraicity of $C^\dagger$, and use it on $X$ to get the algebraicity of $\pr_L$. However, by an explicit calculation:
\begin{eqnarray*}
 C^\dagger =&((L_Y)^{-1}\circ \miu^*)^\dagger &~~~~~~ (\text{see (\ref{C})})\\
=& (\miu^*)^\dagger\circ (L_Y^{-1})^\dagger & ~~~~~ (\text{Lemma \ref{easylemma}})\\
=& ((L_X^{n-k})^{-1}\circ s_X\circ \miu_*\circ s_Y\circ L_Y)\circ L_Y^{-1} & ~~~~~ (\text{by (\ref{adjoitcycle})})\\
=& (L_X^{n-k})^{-1}\circ s_X\circ \miu_*\circ s_Y
\end{eqnarray*}
we find that we only need the standard conjecture on $X$ and the algebraicity of the morphism $s_Y: H^{k-2}(Y,\Q)\to H^{k-2}(Y,\Q)$. While the algebraicity of $s_Y$ on $H^i(Y)$ for $i\leq 3$ is known: firstly $s_Y$ acts as identity on $H^0(Y)$ and $H^1(Y)$, thus is obviously algebraic; as for $H^2(Y)$ (\emph{resp.} $H^3(Y)$), the Lefschetz decomposition has only two factors, and the projector to the primitive factor can be constructed using only the Lefschetz operator and the inverse $H^{2\dim Y}(Y)\lra{\isom} H^0(Y)$ (\emph{resp.} $H^{2\dim Y-1}(Y)\lra{\isom} H^1(Y)$), which is also known to be algebraic. In conclusion, our proof works unconditionally when $k=3, 4, 5$ for $X$ a smooth complete intersection of a product of curves, surfaces, and abelian varieties \etc.
\end{rmk}
\begin{rmk}[A reinterpretation by motivated cycles]\upshape
To get around the standard conjectures and thus obtain some unconditional theories of motives, Y. Andr\'e \cite{MR1423019}  introduced the notion of \emph{motivated cycles}, which is a space of cohomology classes fitting in the following inclusions (conjecturally they are all the same):\\
\{classes of cycles\}$\subset$\{motivated cycles\}$\subset$\{absolute Hodge classes\}$\subset$\{Hodge classes\}.\\
Roughly speaking, motivated cycles are constructed from algebraic cycles by adding the cohomology classes of the inverses of hard Lefschetz isomorphisms in the category of smooth projective varieties with morphisms given by algebraic correspondences. We refer to the original paper \loccit for more details, and also to \cite{MR2115000} Chapter 9, 10 for an introduction.

Now if we considered motivated cycles and motivated correspondences (=motivated cycles in the product spaces) instead of the algebraic ones, we would not have any problem caused by the standard conjectures. In particular, we could define a sub-Hodge structure $L$ of $H^k(X,\Q)$ to be of \emph{motivated coniveau} at least $c$ if there exists a motivated correspondence $\Gamma$ from another smooth projective variety $Y$ to $X$, such that $L$ is contained in the image of $\Gamma_*: H^{k-2c}(Y)\to H^k(X)$. In this language, our result Theorem \ref{mainthm} can be reformulated as:

\begin{thm}
 Let $X$ be a smooth projective variety of dimension $n$, $0\leq k\leq n$ be an integer, and $\gamma\in H^{2n-2k}(X,\Q)$ be a big cohomology class. Let $L$ be the kernel of the following morphism of `cup product with $\gamma$':
$$\cup\gamma: H^k(X,\Q)\to H^{2n-k}(X,\Q).$$
Then $L$ is of motivated coniveau at least 1.
\end{thm}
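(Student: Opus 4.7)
The plan is to re-run the entire proof of Theorem \ref{mainthm} in the motivated category rather than the algebraic one. The key observation is that Andr\'e's ring of motivated cycles is, by construction, obtained from algebraic cycles by formally adjoining the inverses of hard Lefschetz isomorphisms, and is closed under composition, transposition, and polynomial expressions. Consequently, every step of the proof that invoked the Lefschetz standard conjecture (in order to produce an \emph{algebraic} cycle representing either the inverse Lefschetz operator or the operators $s_X$, $s_Y$) becomes \emph{unconditional} once we replace ``algebraic'' by ``motivated'' everywhere.

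First I would invoke Proposition \ref{geometrization} to produce a smooth projective variety $Y$ of dimension $k-1$ together with $\miu: Y \to X$ such that $L \cap \Ker(\miu^*) = \{0\}$; this step uses no standard conjecture. Then I define the motivated correspondence
\[
 C := (L_Y)^{-1} \circ \miu^*: H^k(X,\Q) \to H^{k-2}(Y,\Q),
\]
which is motivated because $\miu^*$ is algebraic and $(L_Y)^{-1}$ is motivated by definition. Its motivated adjoint $C^\dagger$ is defined by the very same formula (\ref{adjoitcycle}), namely $(L_X^{n-k})^{-1}\circ s_X\circ {}^{t}\!\sZ\circ L_Y\circ s_Y$; all building blocks are motivated, since the operators $s_X,s_Y$ are rational polynomial expressions in the Lefschetz operator and its inverse composed with the K\"unneth projectors (themselves motivated). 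The formal properties of Lemma \ref{easylemma} and of Proposition \ref{invrank} only use the adjunction identity $(C\alpha,\beta)=(\alpha,C^\dagger\beta)$ and the fact that $\im(C)$ is a sub-Hodge structure, so they carry over verbatim to the motivated setting.

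Next, exactly as in the proof of Proposition \ref{projectorL}, I introduce the self-adjoint motivated operator $B = (L_X^{n-k})^{-1}\circ s_X\circ (\gamma\cup)$ whose kernel is $L$. The polynomial construction producing a projector $P(B)$ onto $\im(B)$ with $\Ker P(B) = \Ker B$ takes place inside the motivated endomorphism ring of $H^k(X,\Q)$, which is closed under rational polynomials of a single endomorphism; hence $\id - P(B)$ is a motivated orthogonal projector onto $L$. Composing with $C^\dagger$ exactly as in the proof of Theorem \ref{mainthm} and using the equality (\ref{surj2}) yields a motivated correspondence $\Gamma$ from $Y$ to $X$ with
\[
 \im\bigl(\Gamma_*: H^{k-2}(Y,\Q)\to H^k(X,\Q)\bigr) = L,
\]
and since $\dim Y = k-1$, this exhibits $L$ as being of motivated coniveau at least $1$.

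The single conceptual point to verify carefully is that all the auxiliary operators appearing in the proof (the K\"unneth projectors, the Lefschetz projectors onto primitive parts $\pi_{L_X^i H^{k-2i}(X)_{\prim}}$, the sign operator $s_X$, and the inverse Lefschetz isomorphisms) indeed belong to Andr\'e's ring of motivated cycles, and that this ring is stable under the polynomial construction of the projector in Proposition \ref{projectorL}. Both facts are already part of the basic formalism developed in \cite{MR1423019} and reviewed in \cite{MR2115000}, so no genuine difficulty arises beyond bookkeeping; this bookkeeping is precisely what the standard conjecture was doing for us, and that is why the transition to motivated cycles makes the argument unconditional.
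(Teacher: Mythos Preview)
Your proposal is correct and follows exactly the approach the paper intends: the paper presents this theorem only as a reformulation of Theorem~\ref{mainthm}, observing that every use of the Lefschetz standard conjecture there was precisely to make certain operators (the inverse Lefschetz isomorphisms, the $s$-operators, the K\"unneth projectors) algebraic, and that these are motivated by construction in Andr\'e's theory. You have simply written out this translation in full, which the paper leaves implicit. One harmless slip: your closing phrase ``since $\dim Y=k-1$'' is not what certifies motivated coniveau $\geq 1$---the definition only asks that $L$ lie in the image of some motivated $\Gamma_*:H^{k-2}(Y)\to H^k(X)$, which you have already established; the dimension of $Y$ was relevant only in the algebraic version for the support argument.
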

\end{rmk}

\bibliographystyle{alpha}
\bibliography{biblio_fulie}

\end{document}